\def\N{\mathbb{N}}
\def\R{\mathbb{R}}
\def\d{\mbox{d}}
\def\x{{\mathbf x}}
\def\xu{{\textup x}}
\def\t{{\textup t}}
\def\ul#1{{\mathbf #1}}
\def\bs#1{{\bm{#1}}}
\def\wh#1{{\widehat{\mathbf #1}}}
\def\wha#1{{\widehat{#1}}}
\newcommand{\widebar}[2][3]{{}\mkern#1mu\overline{\mkern-#1mu#2}}
\def\ol#1{{\widebar{#1}}}
\def\up{\ul u^\prime}
\newcommand{\SC}{\scriptstyle}
\newcommand{\vc}[1]{{\mathbf #1}}
\newcommand{\vv}{\vc{v}}
\newcommand{\vr}{\rho}
\newcommand{\vt}{\theta}
\newcommand{\vm}{\vc{m}}
\newcommand{\abs}[1]{{\left| #1 \right|}}
\newcommand{\RE}[2]{R_E\left( #1 \mid #2 \right)}
\newcommand{\MI}{\widehat{\mathcal{I}}}
\newcommand{\MT}{\mathcal{I}}
\newcommand{\TA}{\mathcal{T}_A}
\numberwithin{equation}{section}
\newtheorem{thm}{Theorem}[section]
\newenvironment{proof} {{\bf Proof: }}{\hfill $\blacksquare$ \\[2mm]}
\begin{document}


\title {A note on relations between convexity and concavity of thermodynamic functions}

\author{
M\'aria Luk\'a\v{c}ov\'a-Medvid'ov\'a\thanks{\baselineskip=.5\baselineskip Institut f\"ur Mathematik, Universit\"at Mainz, 
Staudingerweg 9, 55128 Mainz, Germany.  Email: lukacova@uni-mainz.de} $\;\;$
Ferdinand Thein\thanks{ \baselineskip=.5\baselineskip Institut f\"ur Mathematik, Universit\"at Mainz, 
Staudingerweg 9, 55128 Mainz, Germany.  Email: fthein@uni-mainz.de} $\;\;$ 
Gerald Warnecke\thanks{ \baselineskip=.5\baselineskip Institute of Analysis and  Numerics,
Otto-von-Guericke-University Magdeburg, PSF 4120, D--39016 Magdeburg,
Germany.  Email: warnecke@ovgu.de} $\;\;$
Yuhuan Yuan\thanks{ \baselineskip=.5\baselineskip  School of Mathematics, Nanjing University of Aeronautics and Astronautics, 
Jiangjun Avenue No. 29, 211106 Nanjing, P. R. China. Email: yuhuanyuan@nuaa.edu.cn} 
}

\date {\today }

\maketitle

\begin{abstract}
The paper is concerned with proving the equivalence of convexity or concavity properties of thermodynamic
functions, such as energy and entropy, depending on
different sets of variables. These variables are the basic thermodynamic state variables, specific state variables or
the densities of state variables that are used in continuum mechanics.  We prove results for transformations of
variables and functions in conjunction with convexity properties. We are concerned with convexity, strict convexity, 
positive definite Hessian matrices and the analogous forms of concavity. The main results are equivalence relations
for these properties between functions. These equivalences are independent of the 
equations of state since they only use general properties of them.
The results can be used for instance to easily prove that the
entropy density function for the Euler equations in conservative variables in
three space dimensions is strictly concave or even has a negative definite Hessian matrix. 
Further,
we show how various equations of
state imply these properties and how these properties are relevant to mathematical analysis. 
\end{abstract}

{\bf Key words}:
Convex, concave functions, convexity preserving transformations, Legendre transform, thermodynamic functions,
thermodynamic stability, symmetric hyperbolic systems\\
{\bf AMS Classification}:  26B25, 80A17, 76A02, 76N15, 35L65.\\[8mm]

\section{Introduction}

Concavity properties of entropy as a function of other state variables are important for a number reasons. 
They are linked to thermodynamic
stability, the use of the Legendre transformation in thermodynamics and the hyperbolicity of systems of conservation laws 
\eqref{conslaw}. By the latter the initial value problem
for the system is well posed. There is a theory of local in time existence of smooth solutions to these systems.
Also concavity properties play a decisive role in convergence analysis of numerical schemes for systems of conservation laws.

Thermodynamic stability theory implies that entropy is a concave function of state variables, see Callen \cite[Chapter 8]{bCAL}
or Kondepudi and Prigogine \cite[Chapter 5]{bKOPR}. Actually, strict concavity is needed if a unique maximum state is to
be attained. Also, strict concavity is sufficient for the Legendre transformations used in thermodynamics to be bijective
transformations.

Boillat \cite{hBOI} proved that a system of conservation laws being hyperbolic is equivalent 
to the entropy being negative concave, i.e.\ having a negative
definite Hessian matrix. So this property is closely linked to the system being hyperbolic and
having well posed initial value problems.

Positive symmetric systems of equations were introduced by Friedrichs \cite{hFRI1} for quite general linear systems,
including systems of mixed type. The theory was
extended to a local in time existence theory of solutions for sufficiently smooth initial data to quasilinear hyperbolic systems 
by Kato \cite{hKAT}. Later accounts of this theory can be found in Majda \cite[Chapter 2]{bMAJ}, Serre \cite[Section 3.6]{bSER3}, 
Alinhac and Gerard \cite[Section III.B]{bALGE}, 
Benzoni-Gavage and Serre \cite[Section 10.1]{bBESE} or Liu \cite[Section 6.4]{bLIU}. 
These results do not include solutions that contain discontinuities such as shock waves or contact discontinuities.

Godunov \cite{hGOD1} showed how the system of Euler equations of gas dynamics can be turned into a positive symmetric hyperbolic
system of Friedrichs type using an entropy with everywhere negative definite Hessian matrices. 
It turns into an entropy function with positive definite Hessians by taking a minus sign.
The theory was put into a general framework of 
symmetric hyperbolic thermodynamically compatible (SHTC) systems of conservation laws and applied to numerous systems
by Romenski \cite{hROM1,hROM2}. Then it was explored together with Toro \cite{hROTO} and other co-authors \cite{hRRT,hRDT,hRBP,hDPR}.
See also the literature cited in the references just mentioned. Based on papers by Godunov \cite{hGOD1}, Friedrichs and Lax 
\cite{hFRLA} and Boillat \cite{hBOI}, a theory of rational extended thermodynamics was developed by
M\"uller and Ruggeri \cite{bMURU1}, see also Ruggeri and Sugiyama \cite{bRUSU1,bRUSU2}.

The convexity of the energy and the concavity of the entropy provide the structural framework for the analysis of 
partial differential equation (PDE) models of compressible fluid flows. We refer, for example, to the fundamental works on the 
local-in-time existence of a strong solution to the Euler equations of gas dynamics of Majda \cite{bMAJ} 
and the global-in-time existence of 
weak or dissipative weak solutions of the Navier-Stokes-Fourier system of viscous compressible heat-conductive fluid flows 
of Feireisl et al.\ \cite{bFLMS, bFENO}, where the convex structures play a crucial role.  
The thermodynamic stability and the resulting 
convexity of the energy yield the relative energy functional that acts as the Bregman distance and can be used for the 
comparison of two different solutions, see e.g.\ Feireisl et al.\ \cite{hBRFE, bFENO} and 
Luk\'a\v cov\'a-Medvi\v dov\'a et al.\ \cite{hnBEHL,  hnLISH, hnLSY, hnLSY1}.
A more detailed description of the application to thermodynamic stability in mathematical and numerical analysis 
of fluid flows will be presented in Section \ref{sec:role}. 
 
All these considerations need at least the strict concavity of physical entropy, some additionally the negative concavity, i.e.\
that the Hessian matrix of entropy as a function of some variables is negative definite. The main point of this paper is 
to provide transformations of variables and functions that preserve these properties. This shows how properties of convexity and
concavity of various thermodynamic functions are related.

The main new results of the paper concern two transformations of variables and functions that maintain
convexity and concavity properties of thermodynamic functions. These are the reciprocal involution and the interchange of 
a variable with a function. The former was introduced as general convexity preserving transformation
by Godunov and Romenskii \cite[\S 20]{bGORO}. The latter
has been commonly used in the field of hyperbolic conservation laws, see e.g.\ Friedrichs and Lax \cite{hFRLA},
Liu and Thein \cite{hLITH}.
They can also be found in Croisille \cite{hCRO}. 
Godunov and Romenskii as well as Croisille gave proofs that show the preservation of convexity and
strict convexity under these transformations. We extend their results to cases in which convexity is transformed to concavity.
New are also our proofs that definiteness properties of the Hessian matrices are transformed as well.
Strict convexity does not imply that the Hessian of a function is positive definite. Therefore,
definiteness properties have to be proven separately.
For completeness, well known results for the Legendre transformation are recalled.
We point out that the reciprocal involution allows the transformation between specific state variables and their densities
when the specific volume is used.
It seems that this has not been widely recognized before. 

Using these transformations one may start with simple thermodynamic stability criteria internal energy or
entropy and obtain convexity or concavity properties of various other thermodynamic functions.
The results on the transformations lead to a short and elegant proof of the negative concavity of the entropy
density $\ol S(\rho,\ol{\ul M},\ol E)$ as a function of the conserved variables of the system of Euler 
equations under appropriate assumptions. These variables are the densities of mass $\rho$, momentum $\ol{\ul M}$
and total energy $\ol E$.
The negative concavity means that the Hessian matrix is negative definite.
An equivalent result of positive definite Hessian matrices is shown for the total energy as a function of the
mass, momentum and entropy densities.
Our proof does not assume 
a specific equation of state, but only general conditions for it.
This fact is frequently used in the literature. But there are very few references to proofs.
We are aware of only two references to a complete proof in the case of three space
dimensions for a general equation of state. One was given by Godunov and Peshkov \cite{hGOPE2}. The second
is in the recent edition of Godlewski and Raviart \cite[Section III.1]{bGORA1}. Our results generalize
steps that they used.

The one dimensional case for a polytropic equation of state is quite straight forward to prove.
Liu and Thein \cite[Proposition 1]{hLITH} proved for general equations of state the negative definite Hessian matrix
in one space dimension.
Harten \cite{hHAR} gave a proof for the two dimensional case and the polytropic equation of state using Sylvester's criterion
for the positive definiteness of the Hessian of the entropy density with a minus sign $-\ol S$. 
He showed that the leading principal minors are all positive. 
In the three dimensional case this approach to the proof would require to show that the 
determinant of a $5\times 5$ fully occupied matrix 
is positive, involving $5! =120$ nontrivial terms.

Luk\'{a}\v{c}ov\'{a}-Medvid'ov\'{a} and Yuan \cite{hLUYU} gave a proof of lower bounds of the eigenvalues of the Hessian of $-\ol S$
via the characteristic polynomial.
The proof is very technical and partially relies on computer algebra. Feireisl et al.\ \cite[Subsection 2.2.3]{bFLMS} 
gave a proof of the concavity of a larger class of entropy densities $\ol S(\rho,p)$ infering the concavity of
$\ol S(\rho,\ol{\ul M},\ol E)$. 
The convexity of the total energy density $\ol E(\rho,\ol{\ul M},\ol S)$ was shown
in \cite[Section 4.1.6]{bFLMS} using the relative energy.
In Section \ref{sec:appl} we show how these results can be proven directly by simple chains of transforming Hessian matrices.

In Godunov and Romenski \cite[Subsection 20.6]{bGORO} there is an interesting approach to a proof that the generating
potential is convex or strictly convex in the main field variables. 
By the Legendre transformation this implies convexity of $-S$
in the conserved variables. We can extend this to the case of positive definite Hessian matrices.
As we point out at the end of Section \ref{sec:appl}, this line of argument allows one to
generate symmetric hyperbolic systems for conserved densities from simple assumptions on the internal energy of
a system, see the comments at the end of Section \ref{sec:appl}.

There is a rich textbook literature of convex analysis and optimization with well defined notions of convexity and
strict convexity.
But unfortunately, there has been a somewhat loose use of notions of convexity in the community of hyperbolic conservation laws.
Also there is some confusion concerning definiteness properties of the Hessian matrices in conjunction with
strict convexity or concavity of smooth functions. In some instances it seems to be suggested that 
strict convexity implies a positive definite
Hessian matrix. This is wrong, see e.g.\ Ortega and Rheinboldt \cite[Section 3.4]{bORRH}. 
The function $x^4$ is a simple counterexample. 
Convexity of a smooth function is indeed equivalent to positive semi-definite Hessian.
For example Friedrichs and Lax \cite{hFRLA} as well as
Harten \cite[p.\ 153]{hHAR} used the term
convexity as being equivalent to a positive definite Hessian matrix.
Analogously, Liu and Thein \cite[(3.1)]{hLITH} termed the case of a negative definite Hessian as concave.
Harten et al.\ \cite[p.\ 2120]{hHLLM} as well as Godlewski and Raviart \cite[Section III.1]{bGORA1}
defined strict convexity to mean the case of a positive definite Hessian matrix.
In their proof of a theorem Berger and Berger \cite[Theorem 4-1]{bBEBE} make the wrong statement of equivalence
of strict convexity and a positive definite Hessian matrix, but use only the correct implication in their proof.
Another example is an incorrect statement of equivalence in Warnecke \cite[Lemma 6.7]{bWAR}.

The paper is structured as follows.
In the next section we discuss convex and concave functions. We consider various transformations of such
functions that preserve these properties or switch between the two. These are the Legendre transformation, the reciprocal involution,
an exchange of variables with functions and affine transformations of variables. Then in Section
\ref{sec:cons} we give a short outline of Godunov's theory for conservation laws and symmetric hyperbolic systems.
In Section \ref{sec:thermo}
we recall some notions of thermodynamics that are fundamental to the application of the convexity results
of Section \ref{sec:convex}. This includes the thermodynamic stability criteria.
As examples we show in Section \ref{sec:conv_ther} how for the ideal gas and the Tait equation of state the stability criteria
are satisfied. Then in Section \ref{sec:appl} we apply the results from the second section
to various thermodynamic functions. Some of the relations are not new, but we are not aware of any presentation
of the complete picture that we give. Finally, in the last section we illustrate the relevance of these properties in 
mathematical analysis of systems of partial differential equations and of numerical schemes for compressible fluid flows.

\section{Convex functions and involutions}
\label{sec:convex}

Of special interest in this paper are properties of convex or concave functions as well as convexity preserving
transformations of variables and functions. Some of these transformations
are {\bf involutions} in the following sense: If you
repeat the same transformation procedure to the new variables and functions you recover the old ones, 
i.e.\ the procedure is self-inverse.
The Legendre transformation has been known for a long time to be an involution that preserves convexity.
Godunov and Romenskii \cite[\S 20]{bGORO} identified two more involutions that preserve convexity. We will consider them below.
The first is a reciprocal involution that can be used to go from the specific quantities to densities and back. Specific
quantities and densities will be introduced in 
Section \ref{sec:thermo}. The other transformation is an exchange of variables and functions. Examples
will be considered in Subsection \ref{sec:pot_state}. 

After recalling some properties of convex functions we will introduce the involutions of the system. We give results on
their convexity preserving properties. If the convex function has a positive definite Hessian matrix, we will also consider
whether the involutions maintain the positive definite Hessian when transforming a convex function. Further, we consider the conditions
under which convexity properties are transformed to concavity properties. This is the case when one goes from energy to entropy.

\subsection{Convex and concave functions, monotone operators}

Let $D \subseteq \R^m$ be a convex open set. We will always assume all sets not to be empty.
We define that a function $\phi: D \to \R $ is 
{\bf convex} if and only if for any $\ul u,\ul v \in D$ the inequality
\begin{equation} 
\label{eq:convex}
\phi\left( s \ul u + (1-s) \ul v \right) \le s \phi(\ul u) +   (1-s)\phi(\ul v)
\end{equation}
holds for all $s \in ]0,1[$. If moreover the strict inequality $<$ holds for any $\ul u\ne\ul v$, the function $\phi$ is 
{\bf strictly convex}. Further, the function $-\phi$ is then {\bf concave} 
respectively {\bf strictly concave}. For a concave function the inequality in \eqref{eq:convex}
is reversed. 

For differentiable $\phi$ there are equivalent conditions of convexity. 
We denote the scalar product between vectors $\ul u,\ul v\in\R^m$ 
by $\ul u\cdot\ul v$. One condition is the inequality
\begin{equation}
\label{eq:conv_ineq}  
\phi(\ul u)-\phi(\ul v) \geq \bs \phi_{\ul v}(\ul v)\cdot (\ul u-\ul v)
\end{equation}
holds for all $\ul u,\ul v \in D$, see Ortega and Rheinboldt \cite[3.4.4.]{bORRH}. In the case of strict convexity
it is a strict inequality for all $\ul u,\ul v \in D$, $\ul u\ne\ul v$.
We also make use of the the inequality
\begin{equation}
\label{eq:conv_ineq1} 
[\bs \phi_{\ul u}(\ul u) - \bs \phi_{\ul v}(\ul v)] \cdot  (\ul u-\ul v)> 0 
\end{equation}
for all $\ul u,\ul v \in D$, $\ul u\ne\ul v$. It is equivalent to the
strict convexity of $\phi$, see \cite[3.4.5.]{bORRH}. 
For the Legendre transformation, see Subsection \ref{sec:legendre}, we will want to know that the change of 
variables $\ul w =\bs\phi_{\ul u} (\ul u)$ under the {\bf gradient map} $\bs\phi_{\ul u}$  is injective. This is 
immediately implied the above inequality for strictly convex functions.
This simple result can be found in Berger and Berger \cite[\S 4-1]{bBEBE}. One can make the gradient map
bijective by restricting the image set.

Now suppose that $\phi:D\to\R^m$ is twice continuously differentiable on the open and convex set $D\subset\R^n$.
Then we can use the Hessian matrices $\bs \phi_{\ul u\ul u}(\ul u)$ for all $\ul u\in D$ to characterize
convexity or concavity of $\phi$. We call a function $\phi$ {\bf positive convex} when its Hessian matrices
are positive definite and {\bf negative concave} in the negative definite case. The well known criteria for positive definiteness apply
to Hessian matrices $\bs \phi_{\ul u\ul u}(\ul u)$ point wise for each $\ul u$. We call the Hessian matrices
{\bf uniformly positive definite} if and only if there is a constant $c>0$ such that
$\bs \phi_{\ul u\ul u}(\ul u)\ul x\cdot\ul x \ge c\ul x\cdot\ul x$ for
all $\ul x\in \R^m$ and all $\ul u\in D$. We then call $\phi$ {\bf uniformly positive convex}.
Suppose that the Hessian matrices $\bs \phi_{\ul u\ul u}(\ul u)$ are point wise positive definite on $D$ and
continuous functions of $\ul u$. Then they
are uniformly positive definite on any compact subset $K\subset D$. The same is true for the inverse
matrices $\bs \phi_{\ul u\ul u}^{-1}(\ul u)$.

Suppose that the function $\phi:D\to\R$ is twice differentiable on the open and convex set $D\subset\R^n$.
The function $\phi$ is convex if and only if the Hessian matrix $\bs \phi_{\ul u\ul u}(\ul u)$
is positive semi-definite, i.e.\ $\bs \phi_{\ul u\ul u}(\ul u) \ul w\cdot\ul w \ge 0$ 
for all $\ul u\in D$ and all $\ul w\in\R^m$.
If we assume that the Hessian matrix $\bs \phi_{\ul u\ul u}(\ul u)$ is positive definite,
i.e.\ $\bs \phi_{\ul u\ul u} (\ul u)\ul w\cdot\ul w > 0$ for all $\ul u\in D$ and all $\ul w\in\R^m\backslash\{\ul 0\}$,
then function is strictly convex. 
The converse is not true, i.e.\ strict convexity only implies that the Hessian matrix is positive semi-definite.
The real function $\phi (u)=u^4$ is an example that is strictly convex but has 
a vanishing second derivative at $u=0$.
These results can e.g.\ be found in Ortega and Rheinboldt \cite[Section 3.4]{bORRH}.

\subsection{The Legendre transformation}
\label{sec:legendre}

Let us assume that $\phi:D\to \R$ is a smooth strictly convex function on the convex set $D\subset\R^m$. We
set $\widehat{D}=\bs\phi_{\ul u} [D]$ to be the image set of the gradient mapping. As noted above,
the gradient map $\MT_L =\bs\phi_{\ul u}:D\to\widehat{D}$ is a bijective
transformation of variables to $\ul w=\MT_L(\ul u)\in\widehat{D}$. Thereby it has 
the inverse $\ul u =\MT_L^{-1}\ul w$.
This transformation of variables is the {\bf Legendre transformation} associated to $\phi$.

Its importance comes from the well known fact that it is induced by the map $\MI_L$ of the strictly convex functions
$\phi:D\to \R$ to strictly convex functions $\psi:\widehat{D}\to \R$  with $\psi = \MI_L(\phi)$
given as
\begin{equation}
\label{eq:legendre}
\psi(\ul w) =\ul w\cdot\ul u - \phi(\ul u)
\end{equation}
for any $\ul w=\MT_L(\ul u)\in\widehat{D}$. Differentiating \eqref{eq:legendre} with respect to $\ul u$
gives transformation of variables $\ul w =\bs\phi_{\ul u}(\ul u)=\MT_L(\ul u)$.
Putting the defining equation  \eqref{eq:legendre} into the form
$\phi(\ul u) + \psi(\ul w) -\ul u\cdot \ul w =0$, we see that the two functions $\phi$ and $\psi$ are related in a dual manner. 
If we switch the variables $\ul u$ and $\ul w$,
we have that $\phi$ is also the Legendre transform of $\psi$. 
The Legendre transform induces, in a slightly generalized sense, an {\bf involution}, i.e.\ applying the
same procedure twice gives the identity mapping.
We may disregard the fact that the functions differ in their variables since the variables are identical modulo the bijective 
transformation $\MT_L$.

Differentiating \eqref{eq:legendre} with respect to $\ul w$ gives $\psi_{\ul w}(\ul w)=\ul u$. So we have $\MT^{-1}_L =\psi_{\ul w}$.
By further differentiation we obtain
$\MT_{L\ul u}(\ul u) =\bs \phi_{\ul u\ul u}(\ul u)$ and 
$\MT^{-1}_{L\ul w}(\ul w) =\bs \psi_{\ul w\ul w}(\ul w)$ 
for the Hessian matrices of $\phi$ and $\psi$. We consider
the identity $\ul w = \MT_L(\MT_L^{-1}(\ul w))$ and differentiate with respect to $\ul w$.
On the left hand side we obtain the indentity matrix $\ul I$ as Jacobian matrix.
The chain rule gives
$$
\ul I = \MT_{L\ul u}(\MT_L^{-1}(\ul w))\MT^{-1}_{L\ul w}(\ul w)=\MT_{L\ul u}(\ul u)\MT^{-1}_{L\ul w}(\ul w)=
\bs \phi_{\ul u\ul u}(\ul u)\bs \psi_{\ul w\ul w}(\ul w).
$$
 This implies that $\bs \psi_{\ul w\ul w}=\bs \phi_{\ul u\ul u}^{-1}$.
Both symmetric matrices have the same definiteness properties.

Suppose that we set $\psi(\ul w) = \phi(\ul u)-\ul w\cdot\ul u$ instead of \eqref{eq:legendre}. 
Then we obtain an analogous transformation that
turns convexity into concavity and vice versa. This type of transformation appears in the thermodynamic definitions of enthalpy
and free energies, see M\"uller and M\"uller \cite[(4.33)]{bMUMU}.

\subsection{The reciprocal involution}

Let us assume that either $u_1>0$ or $u_1<0$ for all $\ul u \in D$ and define the transformation of variables
\begin{equation}
\label{eq:recip}
\ul w =\MT_R(\ul u) =\left(\textstyle\frac 1{u_1}, \frac{u_2}{u_1},\ldots, \frac{u_m}{u_1}\right)^T.
\end{equation}
We set $\widehat{D}=\MT_R[D]$. The map $\MT_R$ is an involution and bijection since 
$\MT_R^{-1}(\ul w) =(\frac 1{w_1}, \frac{w_2}{w_1},\ldots, \frac{w_m}{w_1})^T$ is the same mapping.
We call $\MT_R$ the {\bf reciprocal involution} of variables associated to $u_1$.
Further, we can define a mapping $\MI_R$ of functions $\phi:D\to\R$ to functions $\psi:\widehat{D}\to R$ by
\begin{equation}
\label{recip_fct}
\psi(\ul w) =\MI_R(\phi)(\ul w)
=w_1\phi\left(\textstyle\frac 1{w_1}, \frac{w_2}{w_1},\ldots, \frac{w_m}{w_1}\right)=w_1\phi (\MT_R^{-1}(\ul w)).
\end{equation}
Then \eqref{recip_fct} gives
$$
\textstyle
\phi(\MT_R^{-1}(\ul w))=\phi\left(\frac 1{w_1}, \frac{w_2}{w_1},\ldots, \frac{w_m}{w_1}\right)=\frac 1{w_1}\psi(\ul w) 
=u_1\psi\left(\frac 1{u_1}, \frac{u_2}{u_1},\ldots, \frac{u_m}{u_1}\right)=u_1\psi(\ul w).
$$
So this transformation of the functions is also an involution. Note that this transformation does not depend on the choice of 
the functions $\phi$ or $\psi$. We introduce the notations $\wh u =(u_2,\ldots,u_m)^T\in\R^{m-1}$ and
$\wh w =(w_2,\ldots,w_m)^T\in\R^{m-1}$.
\begin{thm}[Reciprocal involution and convexity]
\label{thm:recinv}
Let us assume that $D\subset\R_{>0}\times\R^{m-1}$ is open, convex and that $\phi:D\to \R$ is continuously differentiable. 
Let $\phi$ and $\psi$ satisfy \eqref{recip_fct}. Then $\psi$ is continuously differentiable on any
open subset of $\widehat{D} =\MT_R[D]\subset\R_{>0}\times\R^{m-1}$.
If one of the functions $\phi$ and $\psi$ is convex, strictly convex, 
concave or strictly concave, then so is the other function.

Let us assume that $\phi:D\to \R$ is twice continuously differentiable with respect to $\ul u$ and that 
the Hessian matrix $\bs \phi_{\ul u\ul u}$
is positive definite or uniformly positive definite. Then the Hessian matrix $\bs \psi_{\ul w\ul w}$
is positive definite, respectively uniformly positive definite.

If instead $D\subset\R_{<0}\times\R^{m-1}$, i.e.\ $u_1<0$, then we can use the same
transformation but the properties are switched, e.g.\ if $\phi$ is convex then $\psi$ is concave.
\end{thm}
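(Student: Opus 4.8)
The plan is to exploit that $\MT_R$ is its own inverse, so each claimed implication need only be proved in one direction; the converse follows by applying the identical argument to $\psi$. Differentiability of $\psi$ on any open subset of $\widehat D$ is immediate, since $\MT_R^{-1}$ is a rational, hence smooth, map on $\R_{>0}\times\R^{m-1}$, so that $\psi(\ul w)=w_1\,\phi(\MT_R^{-1}(\ul w))$ is $C^1$ by the product and chain rules whenever $\phi$ is $C^1$.

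For the transfer of convexity I would argue directly from the defining inequality, using no derivatives at all. Fix $\ul w,\ul w'\in\widehat D$ and $s\in\,]0,1[$, and set $\ul w''=s\ul w+(1-s)\ul w'$, $\ul u=\MT_R^{-1}(\ul w)$, and likewise $\ul u',\ul u''$. The key observation is that the reparametrization is affine up to rescaling by the first coordinate: with the weight $\lambda=sw_1/w_1''\in\,]0,1[$ one checks componentwise that $\ul u''=\lambda\ul u+(1-\lambda)\ul u'$. Applying convexity of $\phi$ to this combination and multiplying the resulting inequality by $w_1''>0$, the products $\lambda w_1''=sw_1$ and $(1-\lambda)w_1''=(1-s)w_1'$ reproduce precisely $\psi(\ul w'')\le s\psi(\ul w)+(1-s)\psi(\ul w')$. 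Since $\MT_R$ is a bijection, $\ul u\ne\ul u'$ if and only if $\ul w\ne\ul w'$, so strict convexity is preserved; concavity and strict concavity then follow because $\MI_R$ is linear, so $-\phi$ is mapped to $-\psi$.

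The transfer of the positive definite Hessian is the new and more technical point. I would first compute the gradient, expecting the compact identities $\psi_{w_1}=\phi-\ul u\cdot\bs\phi_{\ul u}(\ul u)$ and $\psi_{w_j}=\phi_{u_j}(\ul u)$ for $j\ge 2$, and then differentiate once more and collect terms into a congruence
\[
\bs\psi_{\ul w\ul w}(\ul w)=u_1\,P^T\,\bs\phi_{\ul u\ul u}(\ul u)\,P,\qquad
P=\begin{pmatrix}u_1 & \ul 0^T\\ \wh u & -\ul I_{m-1}\end{pmatrix},
\]
where $\det P=(-1)^{m-1}u_1\ne 0$. Because congruence with an invertible matrix preserves the signature (Sylvester's law of inertia) and $u_1>0$, positive definiteness of $\bs\phi_{\ul u\ul u}$ transfers to $\bs\psi_{\ul w\ul w}$. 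The case $u_1<0$ then comes for free: the scalar factor $u_1$ flips the signature in the congruence, while in the convex-combination argument multiplying by $w_1''<0$ reverses the inequality, so in both descriptions convexity is exchanged for concavity.

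The step I expect to be the main obstacle is the uniformly positive definite case. From the congruence one obtains $\bs\psi_{\ul w\ul w}\ul x\cdot\ul x=u_1\,\bs\phi_{\ul u\ul u}(\ul u)(P\ul x)\cdot(P\ul x)\ge c\,u_1\,|P\ul x|^2$, so a uniform estimate for $\psi$ reduces to a uniform lower bound for the smallest eigenvalue of $u_1P^TP$ over $\widehat D$. On any compact subset of $\widehat D$ this is automatic from continuity and pointwise definiteness, exactly as recalled after the definition of uniform positive definiteness; establishing it over all of $\widehat D$ requires controlling the factor $u_1$ together with the entries $\wh u$ of $P$, and this is where I would concentrate the careful estimates.
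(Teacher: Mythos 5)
Your proposal is correct in every part it actually completes, and in one respect it does more than the paper. On the Hessian claim you take essentially the same route as the paper: the paper computes the second derivatives of $\psi$ and reduces the block Hessian by a Frobenius matrix $\ul F$ and a diagonal matrix $\ul D$ to get $\ul D^T\ul F^T\bs\psi_{\ul w\ul w}\ul F\ul D = u_1\bs\phi_{\ul u\ul u}$; your congruence $\bs\psi_{\ul w\ul w} = u_1 P^T\bs\phi_{\ul u\ul u}P$ is this identity read in the opposite direction, since $P=-(\ul F\ul D)^{-1}$, so the two arguments agree up to an irrelevant sign of the transformation matrix (your stated gradient identities also match the paper's; you do still need to carry out the second-derivative computation you only announce as ``expected''). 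On the convexity transfer you genuinely diverge from the paper, which gives no argument and cites Godunov--Romenskii instead: your derivative-free argument with the weight $\lambda = sw_1/w_1''$ is correct and self-contained, and it is worth noting that the injectivity of $\MT_R^{-1}$ is exactly what saves strictness --- the unrestricted perspective $(\ul x,t)\mapsto t\phi(\ul x/t)$ is positively homogeneous and hence never strictly convex, but here the first argument of $\phi$ is pinned to $1/w_1$, so distinct $\ul w\ne\ul w'$ give distinct $\ul u\ne\ul u'$.

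On the uniform case your hesitation is justified, and you should push it further: the difficulty you isolate is not a matter of ``careful estimates'' --- the uniform statement is false as written. Take $m=1$, $D=\R_{>0}$, $\phi(u)=u^2/2$, so $\bs\phi_{uu}\equiv 1$ is uniformly positive definite; then
\begin{equation*}
\psi(w)=w\,\phi(1/w)=\frac{1}{2w},\qquad \psi''(w)=\frac{1}{w^3}=u_1^3\longrightarrow 0\quad\text{as } w\to\infty,
\end{equation*}
so $\bs\psi_{ww}$ is positive definite but not uniformly positive definite on $\widehat{D}=\R_{>0}$. The obstruction is exactly the one you name: the factor $u_1$ and the entries of $P$ degenerate as $u_1\to 0$, i.e.\ as $w_1\to\infty$. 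Note that the paper's own proof does not close this step either; it ends with ``with the sign of $u_1$ the claimed results follow,'' which justifies only the pointwise transfer. Uniformity survives only under extra hypotheses --- e.g.\ $u_1$ bounded away from $0$ together with bounds on $\wh u$, or restriction to compact subsets of $\widehat{D}$, where it follows from continuity and pointwise definiteness as the paper itself remarks after defining uniform positive definiteness. So the single incomplete step in your proposal reflects a gap in the theorem's statement, not a flaw in your method.
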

\begin{proof}
In Godunov and Romenskii \cite[\S 20.6]{bGORO} one can find a proof of the convexity or strict convexity of $\psi$.

Our assumption $D\subset\R_{>0}\times\R^{m-1}$ implies that $u_1>0$. 
We use the shorthand notation $\ul u(\ul w) =\MT_R^{-1}(\ul w)$.
Let us calculate the first derivatives of $\psi$. We have
\begin{eqnarray}
\label{psi_w1}
\psi_{w_1} (\ul w) &=& \phi(\ul u(\ul w)) -w_1\left[ \phi_{u_1}(\ul u(\ul w))\left(\frac 1{w_1}\right)^2
+ \bs \phi_{\wh u}(\ul u(\ul w))\cdot\wh w\left(\frac 1{w_1}\right)^2\right]\nonumber\\
&=& \phi(\ul u(\ul w)) -\frac 1{w_1}\left[ \phi_{u_1}(\ul u(\ul w))
+ \bs \phi_{\wh u}(\ul u(\ul w))\cdot\wh w\right]\\
&=&\phi(\ul u(\ul w)) -\left[ \phi_{u_1}(\ul u(\ul w))u_1
+ \bs \phi_{\wh u}(\ul u(\ul w))\cdot\wh u\right]\nonumber\\
&=& \phi(\ul u(\ul w)) - \bs\phi_{\ul u}(\ul u(\ul w))\ul u\nonumber
\end{eqnarray}
and 
$$
\psi_{\wh w} (\ul w) =w_1\phi_{\wh u}(\ul u(\ul w))\frac 1{w_1}=\phi_{\wh u}(\ul u(\ul w)).
$$

Now we calculate the second derivatives. We use the second equation in \eqref{psi_w1} and have
\begin{eqnarray*}
\psi_{w_1w_1} (\ul w) &=& -\left(\frac 1{w_1}\right)^2 \big(\phi_{u_1}(\ul u(\ul w))+ \bs\phi_{\wh u}(\ul u(\ul w))\cdot\wh w\big)
+\left(\frac 1{w_1}\right)^2 \big(\phi_{u_1}(\ul u(\ul w))+ \bs\phi_{\wh u}(\ul u(\ul w)\cdot\wh w\big)\\
&&+\left(\frac 1{w_1}\right)^3\Big[\phi_{u_1u_1}(\ul u(\ul w))
+2\bs\phi_{u_1\wh u}(\ul u(\ul w))\cdot\wh w +(\bs\phi_{\wh u\wh u}(\ul u(\ul w))\wh w)\cdot\wh w\Big]\\
&=&u_1\Big[(u_1)^2\phi_{u_1u_1}(\ul u(\ul w))
+2u_1\bs\phi_{u_1\wh u}(\ul u(\ul w))\cdot\wh u+(\bs\phi_{\wh u\wh u}(\ul u(\ul w))\wh u)\cdot\wh u\Big]\\
&=& u_1(\bs\phi_{\ul u\ul u}(\ul u(\ul w))\ul u)\cdot\ul u,
\end{eqnarray*}
\begin{eqnarray*}
\bs\psi_{w_1\wh w} (\ul w)&=&\bs\psi_{\wh ww_1} (\ul w)=
-\left(\frac 1{w_1}\right)^2(\bs\phi_{u_1\wh u}(\ul u(\ul w))+\bs\phi_{\wh u\wh u} (\ul u(\ul w))\cdot\wh w)\\
&=&-u_1[u_1\bs\phi_{u_1\wh u}(\ul u(\ul w))+\bs\phi_{\wh u\wh u} (\ul u(\ul w))\cdot\wh u)]
\end{eqnarray*}
and 
$$
\bs\psi_{\wh w\wh w} (\ul w)=\bs\phi_{\wh u\wh u} (\ul u(\ul w))\frac 1{w_1} =u_1\bs\phi_{\wh u\wh u} (\ul u(\ul w)) .
$$
In case $u_1>0$ we have $\psi_{w_1w_1}(\ul w)>0$.
We obtain the symmetric block matrix
$$
\bs\psi_{\ul w\ul w} =u_1{\scriptstyle\left(\begin{array}{c|c} 
(\bs\phi_{\ul u\ul u}(\ul u(\ul w))\ul u)\cdot\ul u
& -[u_1\bs\phi_{u_1\wh u}(\ul u(\ul w))^T+(\bs\phi_{\wh u\wh u}(\ul u(\ul w))\cdot\wh u)^T]\\
\hline
-[u_1\bs\phi_{u_1\wh u}(\ul u(\ul w))&\\
+\bs\phi_{\wh u\wh u}(\ul u(\ul w))\cdot\wh u]&\bs\phi_{\wh u\wh u}(\ul u(\ul w))
\end{array}\right)}.
$$
We use the notation for the identity matrix $\ul I\in\R^{(m-1)\times(m-1)}$ 
and for the vector $\ul 0 =(0,\ldots ,0)^T\in\R^{m-1}$.
With these we introduce the Frobenius matrix $\ul F\in\R^{m\times m}$ for Gaussian type elimination
and the diagonal matrix $\ul D\in\R^{m\times m}$ as
$$
\ul F =\left(
\begin{array}{c|c}
\textstyle 1 & \ul 0^T\\[1mm]
\hline
\wh u &\ul I
\end{array}\right),  \qquad
\ul D =\left(
\begin{array}{c|c}
\textstyle -\frac{1}{u_1} & \ul 0^T\\[1mm]
\hline
\ul 0 &\ul I
\end{array}\right).
$$
These eliminate some terms in the first row and column to give
$$
\ul D^T\ul F^T\bs\psi_{\ul w\ul w}\ul F\ul D=
u_1\ul D{\scriptstyle\left(\begin{array}{c|c} 
\SC (u_1)^2\phi_{\ul u_1\ul u_1}(\ul u(\ul w))
&\SC -u_1\bs\phi_{u_1\wh u}(\ul u(\ul w))^T\\
\hline
&\\
\SC -u_1\bs\phi_{u_1\wh u}(\ul u(\ul w))&\SC\bs\phi_{\wh u\wh u}(\ul u(\ul w))\\
&
\end{array}\right)}\ul D =u_1\bs\phi_{\ul u\ul u}.
$$
With the sign of $u_1$ the claimed results follow.
\end{proof}
\subsection{Involution by exchange of a variable with a function}

A function $\varphi:D\to\R$ with $D\subset\R^m$ for some $m\in\N$ is {\bf monotone increasing}, respectively {\bf decreasing},
in some variable $u_i$ for $1\le i\le n$ if for any $\ul u_1,\ul u_2\in\Omega$ with $u_{i,1}< u_{i,2}$, 
while any other components of $\ul u_1$ and $\ul u_2$ are equal, implies that $\varphi(\ul u_1)\le\varphi(\ul u_2)$, 
respectively $\varphi(\ul u_1)\ge\varphi(\ul u_2)$.
The function is {\bf strictly monotone increasing}, respectively {\bf decreasing}, in this variable
if the strict inequality $\varphi(\ul u_1)<\varphi(\ul u_2)$, respectively $\varphi(\ul u_1)>\varphi(\ul u_2)$,
holds. 

The condition $\varphi_{u_i}(\ul u)\ge 0$, respectively $\varphi_{u_i}(\ul u)\le 0$, for all
$\ul u\in D$ is equivalent to $\varphi$ being monotone increasing, respectively decreasing in that variable.
We write $\varphi_{u_i}\ge 0$, respectively $\varphi_{u_i}\le 0$, as a brief notation.
The conditions $\varphi_{u_i}> 0$, respectively $\varphi_{u_i}< 0$, imply strict monotonicity.
The reverse is not true, since for example $\varphi(u) =u^3$ is strictly monotone increasing but $\varphi^\prime(u)=0$
at $u=0$. We will see an analogue of this non-equivalence for the positive definiteness of Hessian matrices
and strict convexity.

Let $\phi:D\to \R$ be a continuously differentiable convex function on the open convex set $D\subset\R^{m}$
with $\phi_{u_1}(\ul u)<0$ for all $\ul u\in D$. Then $\phi$ is a strictly monotone decreasing function of $u_1$. Again we set
$\wh u =(u_2,\ldots ,u_m)^T\in\R^{m-1}$ for $\ul u=(u_1,\ldots ,u_m)^T\in D$. For each such $\wh u$ we have a global bijection between
$u_1$ and $\phi(\ul u)$. Therefore, we may
choose $\phi$ as a new variable to replace $u_1$. The variables $\wh u$ are parameters in this transformation. For each value of
$\phi$ the dependence of $u_1$ on $\wh u$ is guaranteed by the implicit function theorem. Let $\ul P_1:D\to\R$ be the mapping
to the first variable, i.e.\ $\ul P_1\ul u =u_1$.
We introduce the new variables $\ul w$ and and the {\bf exchange of variables involutions} $\MT_E:D\to\R^m$ 
and $\MI_E$ via
$$
\ul w= \MT_E(\ul u) = (\phi(\ul u),u_2,\ldots, u_m)^T\in\R^m,\qquad\psi(\ul w)=\MI_E(\phi)(\ul w) = \ul P_1\MT_E^{-1}(\ul w)
=\ul P_1\ul u =u_1
$$
for $\ul w\in\widehat{D}=\MT_E[D]$ with 
parameters $\wh w =(w_2,\ldots ,w_m)^T=\wh u\in\R^{m-1}$. Note that this transformation does depend on the choice of $\phi$ ore $\psi$,
as in the case of the Legendre transformation.
\begin{thm}[Exchanging variables and functions]
\label{thm:e}
Let us assume that $\phi:D\to \R$ is continuously differentiable with respect to $\ul u$ on the open convex set $D$,
$\phi_{u_1}(\ul u)<0$ for all $\ul u\in D$ and $\ul w = (\phi,u_2,\ldots, u_m)^T\in\R^m$. 
Let $\psi = \MI_E(\phi)$.  Then $\psi$ is continuously differentiable on any
open subset of $\widehat{D} =\MT_E[D]\subset\R^m$.
If one of the functions $\phi$ and $\psi$ is convex, strictly convex, 
concave or strictly concave, then so is the other function. 

Let us assume that $\phi:D\to \R$ is twice continuously differentiable with respect to $\ul u$ and that 
the Hessian matrix $\bs \phi_{\ul u\ul u}$
is positive definite or semi-definite. Then the Hessian matrix $\bs \psi_{\ul w\ul w}$
is positive definite, respectively positive semi-definite. The same is true for negative definite or
semi-definite Hessian matrices.

If $\phi_{u_1}(\ul u)>0$ for all $\ul u\in D$, then we can use the same
transformation but the properties are switched, e.g.\ if $\phi$ is convex then $\psi$ is concave.
\end{thm}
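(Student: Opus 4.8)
The plan is to treat the three assertions---$C^1$ regularity, preservation of (strict) convexity and concavity, and preservation of Hessian definiteness---separately, exploiting that $\MT_E$ and $\MI_E$ form an involution so that each equivalence need only be proven in one direction. First I would settle regularity and the first derivatives. Since $\phi_{u_1}(\ul u)<0$ on $D$, for every fixed $\wh u$ the map $u_1\mapsto\phi(u_1,\wh u)$ is a strictly decreasing $C^1$ bijection onto its image, so the defining relation $w_1=\phi(\psi(\ul w),\wh w)$ together with the implicit function theorem shows that $\psi=\ul P_1\MT_E^{-1}$ is continuously differentiable on any open subset of $\widehat D$, and twice so whenever $\phi$ is. Differentiating $w_1=\phi(\psi(\ul w),\wh w)$ in $w_1$ and in $\wh w$ gives $\psi_{w_1}=1/\phi_{u_1}$ and $\bs\psi_{\wh w}=-\bs\phi_{\wh u}/\phi_{u_1}$, evaluated at $\ul u=\MT_E^{-1}(\ul w)$. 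In particular $\psi_{w_1}<0$, so $\psi$ again meets the hypothesis of the theorem; as the construction is an involution, applying any one-directional implication to $\psi$ recovers $\phi$, which is why one implication of each equivalence suffices.

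For the convexity statements I would argue directly from the definition, which needs only $C^1$. Fix $\ul w^{(0)},\ul w^{(1)}\in\widehat D$ and $s\in\,]0,1[$, set $\ul u^{(i)}=\MT_E^{-1}(\ul w^{(i)})$ and $\ul w^{(s)}=(1-s)\ul w^{(0)}+s\ul w^{(1)}$, and note $\wh w^{(s)}=(1-s)\wh u^{(0)}+s\wh u^{(1)}$. Convexity of $\phi$ at the in-domain point $\bar{\ul u}=(1-s)\ul u^{(0)}+s\ul u^{(1)}$ yields $\phi(\bar{\ul u})\le(1-s)w_1^{(0)}+sw_1^{(1)}=w_1^{(s)}=\phi(\psi(\ul w^{(s)}),\wh w^{(s)})$. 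Since $\bar{\ul u}$ and $(\psi(\ul w^{(s)}),\wh w^{(s)})$ share the same $\wh u$-part and $\phi$ is strictly decreasing in $u_1$, this comparison of $\phi$-values forces $\psi(\ul w^{(s)})\le(1-s)\psi(\ul w^{(0)})+s\psi(\ul w^{(1)})$, i.e.\ convexity of $\psi$; strict convexity of $\phi$ makes both steps strict. When $\phi_{u_1}>0$ the function $\phi$ is increasing in $u_1$ and the final inequality reverses, turning convexity into concavity. The one subtlety here is that the statement presupposes $\ul w^{(s)}\in\widehat D$, i.e.\ convexity of $\MT_E[D]$; I would either restrict to convex subsets of $\widehat D$ or establish convexity of $\widehat D$ by noting that its lower boundary $\wh u\mapsto\inf_{u_1}\phi(u_1,\wh u)$ is a partial minimization of a jointly convex function and hence convex.

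The definiteness claims are pointwise, so the domain question does not enter, but they require a computation that I expect to be the main obstacle. The Jacobian $\MT_{E\ul u}$ is the upper-triangular block matrix with $(1,1)$ entry $\phi_{u_1}$, remaining first row $\bs\phi_{\wh u}^T$, and lower-right block $\ul I$; hence its inverse $\ul A:=\MT_{E\ul u}^{-1}$ is explicit and invertible. Differentiating $\psi_{w_1}$ and $\bs\psi_{\wh w}$ once more by the chain rule---keeping in mind that $u_1=\psi(\ul w)$ varies with every $w_k$---and sorting the resulting second derivatives into blocks, I expect to arrive at the congruence identity
$$\bs\psi_{\ul w\ul w}=-\frac{1}{\phi_{u_1}}\,\ul A^T\bs\phi_{\ul u\ul u}\ul A,$$
evaluated at $\ul u=\MT_E^{-1}(\ul w)$. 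As $\ul A$ is invertible, $\ul A^T\bs\phi_{\ul u\ul u}\ul A$ shares the inertia of $\bs\phi_{\ul u\ul u}$ by Sylvester's law, and the positive factor $-1/\phi_{u_1}>0$ preserves signs; thus $\bs\psi_{\ul w\ul w}$ is positive, respectively negative, definite or semi-definite exactly when $\bs\phi_{\ul u\ul u}$ is. If $\phi_{u_1}>0$ the factor is negative and definiteness switches, matching the final assertion. The only genuinely laborious part is verifying this identity, where one must correctly track the mixed terms in $\bs\phi_{u_1\wh u}$ and the rank-one contributions $\bs\phi_{\wh u}\bs\phi_{\wh u}^T$; everything else is structural.
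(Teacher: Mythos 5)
Your proposal is correct, and its core coincides with the paper's own proof: your claimed congruence $\bs\psi_{\ul w\ul w}=-\frac{1}{\phi_{u_1}}\,\ul A^T\bs\phi_{\ul u\ul u}\ul A$ with $\ul A=\MT_{E\ul u}^{-1}$ is exactly the paper's identity $\ul D^T\ul F^T\bs\psi_{\ul w\ul w}\ul F\ul D=-\frac 1{\phi_{u_1}}\bs\phi_{\ul u\ul u}$ read backwards, since the paper's product $\ul F\ul D$ is precisely the Jacobian $\MT_{E\ul u}$ (first row $(\phi_{u_1},\bs\phi_{\wh u}^T)$, lower block $(\ul 0\;\ul I)$). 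The paper establishes it by computing all second derivatives of $\psi$ entrywise and then performing Gaussian-type elimination; you anticipate this as the ``laborious'' step, but you can avoid the bookkeeping entirely by differentiating the implicit relation $\phi(\psi(\ul w),\wh w)=w_1$ twice: writing $g(\ul w)=(\psi(\ul w),\wh w)$ one has $g_{\ul w}=\ul A$, and the second differentiation gives $\ul A^T\bs\phi_{\ul u\ul u}\ul A+\phi_{u_1}\bs\psi_{\ul w\ul w}=\ul 0$, which is the identity in two lines. Where you genuinely depart from the paper is the (strict) convexity/concavity equivalence: the paper simply cites Godunov and Romenskii for it, whereas you give a self-contained elementary argument (convexity of $\phi$ along the segment plus strict monotonicity in $u_1$ forces the defining inequality for $\psi$), which works under $C^1$ only and makes the note more self-contained; your use of the involution to reduce each equivalence to one implication matches the paper's implicit logic.

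One caution on the subtlety you flagged: you are right that convexity of $\widehat D=\MT_E[D]$ is needed for the convexity statement to even make sense, and this can genuinely fail --- e.g.\ $D=\{u_1>0\}\subset\R^2$, $\phi(u_1,u_2)=u_2^2-u_1$ gives $\widehat D=\{w_1<w_2^2\}$, which is not convex. However, your proposed repair via the lower boundary does not close this: the relevant obstruction is the \emph{upper} boundary $\wh u\mapsto\sup_{u_1}\phi(u_1,\wh u)$, which as a supremum of convex functions is convex rather than concave, exactly as in the counterexample. So the correct fix is your first fallback (restrict to convex open subsets of $\widehat D$, or read convexity locally/on segments contained in $\widehat D$); note that the paper itself silently ignores this point, and its Hessian-based conclusions are pointwise and hence unaffected.
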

\begin{proof}
In Godunov and Romenskii \cite[\S 20.6]{bGORO} one can find a proof of the convexity or strict convexity of $\psi$.

The first derivatives of $\psi$ are 
$$
\psi_{w_1}(\ul w) =\frac 1{\phi_{u_1}(\psi(\ul w),\wh w)}\qquad\text{and}\qquad
\psi_{w_i}(\ul w) =-\frac {\phi_{u_i}(\psi(\ul w),\wh w)}{\phi_{u_1}(\psi(\ul w),\wh w)}\quad\text{for}\; i =2,\ldots m.
$$
Now we calculate the second derivatives of $\psi$ 
$$
\psi_{w_1w_1} =-\frac 1{(\phi_{u_1})^3}\phi_{u_1u_1},\qquad \psi_{w_1w_i} =\psi_{w_iw_1} 
=\frac {\phi_{u_i}}{(\phi_{u_1})^3}\phi_{u_1u_1}
-\frac 1{(\phi_{u_1})^2}\phi_{u_1u_i}\quad\text{for}\; i=2,\ldots,m
$$
and
$$
\psi_{w_iw_j} =-\frac {\phi_{u_i}\phi_{u_j}}{(\phi_{u_1})^3}\phi_{u_1u_1}+\frac {\phi_{u_i}}{(\phi_{u_1})^2}\phi_{u_1u_j}
+\frac {\phi_{u_j}}{(\phi_{u_1})^2}\phi_{u_1u_i}-\frac 1{\phi_{u_1}}\phi_{u_iu_j}
\quad\text{for}\; i,j=2,\ldots,m.
$$
We define the following Frobenius matrix $\ul F\in\R^{m\times m}$ and diagonal matrix $\ul D\in\R^{m\times m}$
using the identity matrix
$\ul I\in\R^{(m-1)\times(m-1)}$ as well as the vector $\ul 0 =(0,\ldots ,0)^T\in\R^{m-1}$ as
$$
\ul F =\left(
\begin{array}{c|c}
\textstyle 1&\bs\phi_{\wh u}^T\\[1mm]
\hline
\ul 0 &\ul I
\end{array}\right), \qquad
\ul D =\left(
\begin{array}{c|c}
\textstyle\phi_{u_1} & \ul 0^T\\[1mm]
\hline
\ul 0 &\ul I
\end{array}\right).
$$
We obtain with $i,j= 2,\ldots, m$ again by Gaussian type elimination
$$
\ul D^T\ul F^T\bs\psi_{\ul w\ul w}\ul F\ul D =-\frac 1{\phi_{u_1}}\bs\phi_{\ul u\ul u}.
$$
By the sign of $\frac 1{\phi_{u_1}}$ the results follow.
\end{proof}
\subsection{Affine transformations}

Affine transformations of coordinates preserve convexity.
Let $D\subset\R^m$ be a convex set and $\ul \phi:D\to\R$ be a convex, strictly convex, concave or strictly concave 
function. We consider any affine function $\TA:\wha D\to D$ with the convex set 
$\wha D =\{\ul w\in\R^n | \TA\ul w\in D\}\ne\emptyset$ 
that is given by some matrix $\ul T\in\R^{m\times n}$ and some vector
$\ul b\in\R^m$ as $\TA\ul w =\ul T\ul w+\ul b$. Associated to the transformation of variables via $\TA$ we have 
the transformation of functions $\widehat{\mathcal{T}}_A$ defined via
$$
\psi =\widehat{\mathcal{T}}_A(\phi)=\phi\circ\TA:\R^n\to \R
$$ 
with $\psi (\ul w) =\phi(\TA\ul w) =\phi (\ul u)$
for $\ul u=\TA\ul w\in D$ and $\ul w\in\wha D$. It has the respective property of $\phi$, as can easily be seen by
e.g.\ using \eqref{eq:convex}. 
These transformations do not depend on $\phi$ or $\psi$.

Suppose that the set $D$ is open as well as convex, that $\phi$ is twice differentiable 
with a positive or negative definite or semi-definite Hessian $\bs\phi_{\ul u\ul u}$
and that $\ul T$ is a regular, i.e.\ invertible, matrix.
Then $\psi$ is twice differentiable with a Hessian $\bs\psi_{\ul w\ul w}$ having the same property as $\bs\phi_{\ul u\ul u}$.
If $\ul T$ is not regular, then $\bs\psi_{\ul w\ul w}$ is only semi-definite, even if $\bs\phi_{\ul u\ul u}$ is definite.
By the chain rule we have $\bs\psi_{\ul w\ul w} =\ul T^T\bs\phi_{\ul u\ul u}\ul T$. This yields the desired properties.

A later useful special case is that $\ul T$ is a diagonal matrix with diagonal entries $\pm 1$ and $\ul b=\ul 0$. This amounts to
sign changes in the variables, i.e.\ the functions 
\begin{equation}
\label{eq:sign}
\phi(\pm u_1,\ldots, \pm u_m)
\end{equation}
inherit all properties of $\phi$ and its Hessian matrix if it is a smooth function.

\section{Conservation laws and symmetric hyperbolic systems}
\label{sec:cons}

Let us denote the spatial variables as $\x=(\xu_1,\ldots ,\xu_d)\in\R^d$, for $d=1,2,3$, $|\cdot|$ 
the Euclidean norm in $\R^d$ and the time variable $\t\in \R_{\geq 0}$. We consider an appropriate convex 
subset of physically admissible states
$D\subseteq \R^m$ with non-empty interior.
Then we consider for the column vector 
field of conserved variables $\ul u =(u_1,\ldots ,u_m)^T\in D$ the system of $m\in \N$ conservation laws in divergence form
\begin{equation}
\label{conslaw}
\ul u_\t + \sum_{i=1}^d \ul f^i ( \ul u) _{\xu_i} = \ul 0 
\end{equation}
with smooth flux functions $\ul f^i:D\to \R^m$.
Denote their Jacobian matrices as $\ul A^i =\ul f^i_{\ul u}=(\partial_{u_k}f^i_j)_{1\le j,k\le m}$. 
The system \eqref{conslaw} is called {\bf hyperbolic} if for any $\ul u\in D$ all eigenvalues $\lambda(\ul u,\ul n)$ of the 
matrices $\ul A(\ul u,\ul n)=\sum_{i=1}^d n_i\ul A^i(\ul u)$,
for any unit vector $\ul n\in\R^d$ with $|\ul n|=1$, are real and there is complete set of
$m$ linearly independent eigenvectors $\ul r(\ul u,\ul n)\in\R^m$. 

\subsection{Entropy functions and symmetrization}

Many systems \eqref{conslaw} have for $\ul u\in D$ a convex function $\Phi:D\to \R$ which is also a conserved quantity.
It is called the {\bf entropy function} because for some systems it is related to the 
concave thermodynamic entropy density $S=-\Phi$, see Section \ref{sec:thermo}. 
As a matter of taste mathematicians prefer to work with convex functions instead of concave functions.
They differ only by a minus sign.
Such an entropy function must come together with 
{\bf entropy fluxes} $ \Psi^i:D\to\R$ for $i=1,\ldots ,d$ to give the additional conservation law
\begin{equation}
\label{entcons}
\Phi(\ul u)_\t + \sum_{i=1}^d \Psi^i(\ul u)_{\xu_i} =0.
\end{equation}
The physical entropy is a concave function on state space. 
The formulations differ only by a minus sign since negating a convex function makes it concave and vice versa, 
see Section \ref{sec:convex}. 

For a scalar function like $\Phi$ we denote by $\Phi_{\ul u}$ the gradient vector with respect to $\ul u$ and 
by $\Phi_{\ul u\ul u}$ the Hessian matrix. 
All smooth solutions to the system \eqref{conslaw} satisfy also the conservation law \eqref{entcons}
for an entropy function $\Phi:D\to\R$ with associated flux functions $ \Psi^i:D\to\R$ for $i=1,\ldots ,d$ iff the relations
$(\Psi^i_{\ul u})^T=\Phi_{\ul u}^T\ul f^i_{\ul u}=\Phi_{\ul u}^T\ul A^i$ hold for $i=1,\ldots,d$.

The entropy function $\Phi$ can be used to transform the system \eqref{conslaw} into a positive symmetric system. 
This was first discovered by Godunov \cite{hGOD1,hGOD2}.
Later Friedrichs and Lax \cite{hFRLA} found a second possibility of symmetrization that is less advantageous since it
does not maintain the jump relations for weak solutions. 
The vector of the main field or entropy variables is 
given as $\up(\ul u)=\bs\Phi_{\ul u}(\ul u)$. It was first introduced by Godunov \cite{hGOD1}. 
Ruggeri and Strumia \cite{hRUST} coined the name main field.
Any system \eqref{conslaw} with a strictly convex entropy function and corresponding entropy fluxes satisfying
the consistency conditions $(\Psi^i_{\ul u})^T=\Phi_{\ul u}^T\ul f^i_{\ul u}$ can be transformed into such variables. 

Let us consider the Legendre transform of the entropy function $\Phi$ and an analogous transformation
of its fluxes 
\begin{equation}
\label{legendre}
L(\up) = \up\cdot\ul u -\Phi(\ul u),\qquad
L^i(\up) = \up\cdot\ul f^i(\ul u) -\Psi^i(\ul u)
\end{equation}
with $\up =\bs\Phi_{\ul u}(\ul u)$. If we assume that $\Phi$ is strictly convex this transformation
of variables is bijective to $\ul u[D]$ via \eqref{eq:conv_ineq1}. 
The new functions $L$, $L^i$ defined by the transformations are
{\bf generating potentials}.  
These functions are all potentials in the
usual meaning that their gradients $L_{\up}(\up ) =\ul u$ and $L^i_{\up}(\up ) =\ul f^i(\ul u)$ are 
physically important vector fields.

We can insert $\ul u(\up)$ into $\ul L_{\up}(\up(\ul u)) =\ul u$ 
and $\ul L^i_{\up}(\up(\ul u)) =\ul f^i(\ul u)$.
Then the system \eqref{conslaw} can be rewritten in the main field variables 
$\up$ to give
\begin{equation}
\label{entvar-law2}
\ul L_{\up}(\up)_\t + \sum_{i=1}^d \ul L^i_{\up}(\up)_{\xu_i}=\ul u(\up)_\t + \sum_{i=1}^d\ul f^i(\ul u(\up))_{\xu_i} =\ul 0.
\end{equation}
The conserved quantities $\ul u$ remain the same. They are only expressed in different variables. 
The quasilinear form of \eqref{entvar-law2} is
\begin{equation}
\label{symm2}
\ul u_{\up}(\up)\up_\t + \sum_{i=1}^d\ul f^i_{\ul u}(\ul u(\up))\ul u_{\up}(\up)\up_{\xu_i}
=\ul L_{\up\up}(\up)\up_\t + \sum_{i=1}^d \ul L^i_{\up\up}(\up)\up_{\xu_i}=\ul 0.
\end{equation}
It is a symmetric system because the coefficient matrices 
$\ul L_{\up\up}(\up)$ and $\ul L^i_{\up\up}(\up)$ 
for $i=1,\ldots ,d$ are Hessian matrices and hence symmetric. 
If we want to use results from the theory of positive symmetric hyperbolic systems,
we have to require that the Hessian matrix $\ul L_{\up\up}(\up)$ is positive definite for all $\ul u$ of interest.
For a more detailed survey of this theory see Warnecke \cite{hWAR1}.

\section{Thermodynamic states and their relations}
\label{sec:thermo}

Basic state variables of any substance that are quantifiable by measurement are the volume $V$ it is contained in, 
the pressure $p$ it is subject to and its absolute temperature $\theta$.
These state variables are called the {\bf thermal state variables}, see e.g.\ Schmidt \cite[Section 3.2]{bSCHM}, because their value
can be determined by measurement.
There are relations between the thermal state variables called thermal equations of state. 
They are determined from many measurements and embody the particular properties of the material substance 
that is being considered in an application.
 
The caloric state variables are that state variables that cannot be
measured directly. But they are very useful for the theory of thermodynamics, see Schmidt \cite[Section 3.2]{bSCHM}.
Their values have to be determined from thermal state variables using formulas. 
Important caloric state variables are the internal energy $U$, 
and the entropy $S$ of the substance. 
The internal energy $U$ is determined from the thermal state variables as a function of $V$ and $\theta$
called {\bf caloric equation of state}, see M\"uller and M\"uller \cite[Prologue P.1]{bMUMU} 
or Schmidt \cite[Chapter 12]{bSCHM}. 
The former reference uses the mass density instead of volume.
Actually, in some instances, such as an ideal gas, $U$ is only a function of the temperature $\theta$, 
see Kondepudi and Prigogine \cite[Section 1.4]{bKOPR}, M\"uller and M\"uller \cite[Subsection 1.5.9]{bMUMU}
or Schmidt \cite[Section 12.1]{bSCHM}.

The state variables volume $V$, 
internal energy $U$, 
and entropy $S$ are {\bf extensive state variables} depending on the size of the system under consideration,
i.e.\ their value doubles when the system is doubled in size. Another measurable 
extensive state variable is the mass $M$ of a substance.
{\bf Intensive state variables} are characterized by the fact that their value remains the same when the size of the system is changed. 
This is true for the pressure $p$ and the absolute temperature $\theta$. 

\subsection{Specific state variables}

The term {\bf specific} is used for a physical state variable taken per unit mass, i.e.\ it is divided by the mass $M$. 
They belong to the category of intensive state variables.  
We use small letters for specific thermodynamic state variables as well as velocity $\ul v$ as a specific 
dynamic state variable. 
Important specific state variables are the specific thermal state variable specific volume $v=\frac VM$ and
the specific caloric state variables specific internal 
energy $u=\frac UM$ as well as specific entropy $s=\frac SM$. Further, we have the
specific total energy $e = u + \frac{|\ul v|^2}{2}$ being the sum of
specific internal energy $u$ and the specific kinetic energy $e^{kin}=\frac{|\ul v|^2}{2}$.

\subsection{Densities}

The term {\bf density} in connection with a physical state variable means that the state variable
is being taken per unit volume, i.e.\ it is divided by the volume $V$.
For some extensive state variables
that have associated densities we use a bar over the same capital letter to denote the density. For instance
$\ol {S}$ for the entropy density. 
An exception is the mass density denoted as $\rho =\frac MV$.

Densities of extensive state variables are obtained from specific state variables by
multiplying the latter by the mass density $\rho$, e.g.\ the specific internal energy $u$ becomes the 
internal energy density $\ol{U}=\rho u$ or the velocity vector $\ul v$ then becomes 
the momentum density vector $\ol{\ul M}=\rho\ul v$. 
In this terminology the velocity $\ul v$ could also be called the specific momentum.

We will also need the total energy $E=U+M\frac{|\ul v|^2}2$.
It is the sum of the internal energy $U$
and the kinetic energy $E^{kin}=M\frac{|\ul v|^2}2$. 
They also correspond to the internal energy density $\ol{U} =\rho u$,
the kinetic energy density $\ol{E}^{kin}=\rho e^{kin}=\frac{|\ol{\ul M}|^2}{2\rho}$ and the total energy density 
$$
\ol{E}(\rho,u,\ul v)=\rho e(u,\ul v) = \ol{U}(\rho,u) + \ol{E}^{kin}(\rho,\ul v)
=\rho u +\rho\frac{|\ul v|^2}{2}\qquad\text{or}\qquad
\ol{E}(\rho ,\ol{U},\ol{\ul M}) = \ol{U}+\frac{|\ol{\ul M}|^2}{2\rho}.
$$
\subsection{Fundamental equations and generating potentials}
\label{sec:pot_state}

In the following text we will frequently make changes of state variables in functions that map to another state variable. 
Since we would run out of notations we do not rename the functions with the new variables. We always use the image
state variable to denote the function. We assume that all thermodynamic functions, that we take into consideration,
are continuously differentiable as often as we need, at least twice. 

Suppose that we are given the extensive state variable internal energy $U$ as a 
function of the extensive state variables entropy $S$ and volume $V$, i.e.\ we are considering $U(V,S)$.
This function is a generating potential for other state variables. When $U$ is given as a function of
these variables the resulting equation is called the {\bf fundamental equation} of thermodynamics, see Callen \cite[Section 2.1]{bCAL}.
In principle the internal energy $U$ can be expressed as a function of any two of the variables $S$, $V$, $\theta$
or $p$ as its arguments. 

We will use the notation $U_S$ for $\frac{\partial U}{\partial S}$ etc.
From the first law of thermodynamics and the second law for reversible processes one can derive
the equivalent equations 
\begin{equation}
\label{eq:gibbs}
\d U=\theta\,\d S -p\,\d V \qquad\text{or}\quad
\d S  =\frac 1\theta \big(\d U+p\,\d V\big),
\end{equation}
see Callen \cite[(2.6)]{bCAL} or Kondepudi and Prigogine \cite[(4.1.2)]{bKOPR}.
Variants of these are sometimes called {\bf Gibbs equation}, see e.g.\ M\"uller and M\"uller \cite[(4.19),(4.20),(8.9)]{bMUMU}.  
 
The Gibbs equation \eqref{eq:gibbs} for the internal energy $U$ is equivalent to the relations
\begin{equation}
\label{eq:theta_p}
U_V=\left(\frac{\partial U}{\partial V}\right)_{S}=-p\qquad\text{and}\qquad 
U_S=\left(\frac{\partial U}{\partial S}\right)_{V}=\theta.
\end{equation}
So $U$ is a potential for the vector field $\left(\theta , -p\right)^T$. The values of
the components of this vector field may be determined via \eqref{eq:theta_p} if $U(V,S)$ is known.
Another consequence of \eqref{eq:gibbs} uses the equality of mixed second derivatives such as $U_{SV}=U_{VS}$.
The resulting equations, such as $\theta_V =- p_S$, are called {\bf Maxwell relations}.

For the entropy $S$ the Gibbs equation \eqref{eq:gibbs} gives analogouly
\begin{equation}
\label{eq:S_deriv}
S_V=\left(\frac{\partial S}{\partial V}\right)_{U}=\frac p\theta \qquad\text{and}\qquad
\quad S_U=\left(\frac{\partial S}{\partial U}\right)_{V}=\frac 1\theta .
\end{equation}
So $S$ is a potential for the vector field 
$\left(\frac 1\theta , \frac p\theta\right)^T$. 
The values of the components of this vector field may be determined if $S(V,U)$ is known.
We see that the theory is slightly more elegant if $U$ is used as a potential instead of $S$.

The surprising result of Godunov \cite{hGOD1}
was that there are analogously a potential for the vector of conserved variables 
and the flux vectors of the system of Euler equations of gas dynamics.
Godunov \cite{hGOD1} mentioned that his search for a potential function $L$ given by
\eqref{legendre} and leading to the equations \eqref{entvar-law2} was motivated by the potentials of thermodynamics. 

We generally assume that the absolute temperature and the pressure satisfy 
$\theta ,p>0$. This means that $U_V,U_S>0$.
We can use $U_S>0$ and Theorem \ref{thm:e} to replace $U(V,S)$ by $S(V,U)$ as our fundamental equation or potential.
Then the second
Gibbs equation \eqref{eq:gibbs} $\d S = \frac 1\theta \left(\d U+p \,\d V\right)$ holds.

\section{Convex or concave thermodynamic functions}
\label{sec:conv_ther}

There are some physical restrictions on the nature of the potentials that we now discuss. Kondepudi and Prigogine
\cite[Chapter 5]{bKOPR} discuss a number of extremal principles for various potentials, see also Callen \cite[Chapter 8]{bCAL}.
This topic is also called {\bf thermodynamic stability}. Thermodynamic processes will maximize entropy or minimize
some form of energy depending on the nature of the process.

The second law of thermodynamics states that in an isolated system entropy is maximized in a thermodynamic process.
For the extremal state to be a unique maximum the entropy has to be strictly concave.
The Hessian matrix of the entropy $S$ must therefore be be at least negative semi-definite, as pointed out in Section \ref{sec:convex}.
This is equivalent to
\begin{equation}
\label{eq:stab}
S_{VV}\le 0,\qquad S_{UU}\le 0\qquad\text{and}\qquad S_{VV}S_{UU}-S_{VU}^2\ge 0.
\end{equation}
These conditions are called {\bf thermodynamic stability conditions}, see Callen \cite[Chapter 8]{bCAL}.

Above we showed how $U(V,S)$ is obtained from $S(V,U)$. Theorem \ref{thm:e} then implies that 
$U$ is a strictly convex function of the variables $V$ and $S$ satisfying
\begin{equation}
\label{eq:u_posdef}
U_{VV}\ge 0,\qquad U_{SS}\ge 0\qquad\text{and}\qquad U_{VV}U_{SS}-U_{VS}^2\ge 0.
\end{equation}
Of the two non-mixed derivative conditions only one is actually required. The non-negative determinants in \eqref{eq:stab}
and \eqref{eq:u_posdef} imply that these derivatives must have the same sign. So only two stability conditions are needed.
These conditions and \eqref{eq:theta_p} give $U_{VV}=-p_V\ge 0$, $U_{SS}=\theta_S\ge 0$,
$U_{VS}=-p_S=\theta_V$ and
$p_V\theta_S-\theta_Vp_S=-U_{VV}U_{SS}+U_{VS}^2\le 0$.

The conditions \eqref{eq:u_posdef}, using only one non-mixed derivative, are Sylvester's criterion 
for a positive semi-definite $2\times 2$ matrix,
see e.g.\ Strang \cite[Section 6.3]{bSTR}. The conditions \eqref{eq:stab} are for a negative semi-definite $2\times 2$ matrix.
They are obtained noting that $-S$ is then positive semi-definite.

For some purposes of mathematical analysis one may need that the Hessian matrix of entropy is negative definite,
i.e.\ that $S$ is negative concave.
Then the inequalities in \eqref{eq:stab} and \eqref{eq:u_posdef} have to be strict. 
We define this case as {\bf strong thermodynamic stability}. 
This will carry over to all transformations of variables considered in Section \ref{sec:convex} when applied to such functions.

\subsection{Stability conditions in other variables}

For practical purposes it is advantageous to have stability conditions that are directly related
to measurable quantities.
From \eqref{eq:theta_p} we have $\theta(V,S) = U_S(V,S)$. We assume $\theta$ to be positive. 
Let us further assume strong stability, i.e.\ the strict inequalities in 
the stability conditions \eqref{eq:u_posdef}. Then we have $\theta_S=U_{SS}>0$. This implies that $\theta$ is a
strictly monotone increasing function of $S$ and we can invert this function to $S(V,\theta)$. This form of the entropy
has the advantage that it is a function of measurable quantities. In the case of strong stability or
positive convexity of $U$, it is strictly monotone increasing
in $\theta$ with $S_\theta(V,\theta) = \frac 1{U_{SS}(V,S(V,\theta))}>0$. With the new variables for the entropy
$S$, we obtain the caloric equation of state $U(V,\theta)=U(V,S(V,\theta))$.
From \eqref{eq:theta_p} we have $\theta(V,S) = U_S(V,S(V,\theta))$. With this we obtain
$$
U_\theta(V,\theta) = U_S(V,S(V,\theta))S_\theta(V,\theta)=\theta(V,S)\frac 1{U_{SS}(V,S(V,\theta))}>0.
$$
So we have $U_\theta(V,\theta)>0$ as an equivalent stability condition 
in the variables $V$ and $\theta$ replacing the condition $U_{SS}(V,S)>0$. 
Using the results in Subsection \ref{subsec:ext_int} below
we can analogously obtain $u_\theta(v,\theta)>0$. The quantity $c_v =u_\theta(v,\theta)$
is the specific heat capacity at constant volume. It is a quantity that can be measured,
see M\"uller and M\"uller \cite[(2.16),(2.20)]{bMUMU}.
Further, we may substitute $v=\frac 1\rho$ to give $u_\theta(\rho,\theta)>0$.

We have the relation $\theta=\theta(V,S(V,\theta))$ where the left hand side is a variable. 
Differentiating with respect to $V$ gives
$$
0=\theta_V(V,S(V,\theta))+\theta_S(V,S(V,\theta))S_V(V,\theta)\qquad\text{or}\qquad S_V
=-\frac{\theta_V(V,S)}{\theta_S(V,S)}=-\frac{U_{SV}(V,S)}{U_{SS}(V,S)}.
$$
From \eqref{eq:theta_p} we also have $p(V,S) = -U_V(V,S)$. This leads for the
thermal equation of state $p(V,\theta) = -U_V(V,S(V,\theta))$ to
\begin{align*}
p_V(V,\theta) &=-U_{VV}(V,S(V,\theta))-U_{VS}(V,S(V,\theta))S_V(V,\theta)\\
&=-\left(U_{VV}(V,S(V,\theta))-U_{VS}(V,S(V,\theta))\frac{U_{VS}(V,S(V,\theta))}{U_{SS}(V,S(V,\theta))}\right)<0,
\end{align*}
due to the strict inequality in the determinant condition from \eqref{eq:u_posdef}. So we have found $p_V(V,\theta)<0$
as the second condition for strong stability using the variables $V$ and $\theta$. Again we can analogously proceed
to obtain $p_v(v,\theta)<0$.
Taking $\rho$ instead of $v$ reverses the sign in the derivative to yield $p_\rho(\rho,\theta)>0$. So strong stability is equivalent
$u_\theta(\rho,\theta)>0$ and $p_\rho(\rho,\theta)>0$. These conditions were used for instance in Feireisl et al.\ \cite{bFLMS}
in conjunction with mathematical analysis for the Euler equations of gas dynamics.

The rate of change in pressure with respect to volume $p_V$ at constant temperature 
and thereby $p_v$ can be determined by measurement. Due to $p_v(v,\theta)<0$ one may also
invert this function to give $v(p,\theta)$. 
The quantity $\kappa_\theta =-\frac 1v v_p(p,\theta)=-\frac 1{vp_v(v,\theta)}$ is the compressibility
of the material, see e.g.\ M\"uller and M\"uller \cite[(2.12)]{bMUMU}.

Note that all steps above are reversible.
So one can determine strong stability of the fundamental equations for $U$ and $S$, i.e.\
strict inequalities in \eqref{eq:stab} and \eqref{eq:u_posdef},
from the conditions
\begin{equation}
\label{eq:up}
u_\theta(v,\theta)>0\;\;\text{or}\;\;u_\theta(\rho,\theta)>0\qquad\text{and}\qquad p_v(v,\theta)<0\;\;\text{or}\;\;
p_\rho(\rho,\theta)>0
\end{equation}
on the caloric and thermal equations of state respectively.

\subsection{The ideal polytropic gas and the van der Waals equation of state}

Now we want to consider an ideal polytropic gas as a specific equation of state and its compatibility with the convexity conditions such
as for example \eqref{eq:stab} or \eqref{eq:u_posdef}. We will see in Subsection \ref{subsec:ext_int}
that these conditions are equivalent for extensive and the corresponding intensive variables.

For the description of an ideal polytropic gas we need the 
{\bf specific heat capacities} at constant volume $c_v=u_\theta(v,\theta)$ 
and at constant pressure $c_p$ They are measurable material
properties that we assume to be constant. With these we define the {\bf specific gas constant} ${\cal R} =c_p-c_v$
It is quite common to use the {\bf adiabatic exponent} 
$$
\gamma := \frac{{\cal R}}{c_v} +1  =  \frac{c_p}{c_v}>1.
$$
Details may be found e.g.\ in M\"uller and M\"uller \cite[(2.26),(4.36)]{bMUMU} where they also point out that $c_p>c_v$ and thereby
$\gamma >1$ as well as ${\cal R}>0$ hold for all substances. Typical values are for a mono atomic gas $\gamma = 5/3=1.66\ldots$,
for a diatomic gas $\gamma = 7/5=1.4$ and for a higher atomic multiplicity $\gamma =4/3=1.33\ldots$,
see M\"uller and M\"uller \cite[1.5.8]{bMUMU}. The adiabatic exponent $\gamma$ has no units. Due to the two given
relations between them, only two of the values of ${\cal R}$, $c_v$, $c_p$ or $\gamma$ have to be known.
Then the other two are determined.

The {\bf ideal gas law} is a thermal equation of state given as
\begin{equation}
\label{ideal}
p(\rho,\theta) = {\cal R} \rho \theta .
\end{equation}
In a polytropic gas, see Courant and Friedrichs \cite{bCOFR}, we assume also the caloric equation of state
\begin{equation}
\label{calor}
u(\theta) = c_v \, \theta .
\end{equation}
For the ideal gas it does not depend on the mass density $\rho$, see M\"uller and M\"uller \cite[Prologue P.1]{bMUMU}.
We see that the strong stability conditions \eqref{eq:up} hold iff $c_v>0$.

Together the ideal gas law \eqref{ideal} and \eqref{calor} give the well known {\bf polytropic equation of state}
\begin{equation}
\label{poly}
p(\rho ,u) =\frac {{\cal R}}{c_v}\rho u=(\gamma -1)\rho u.
\end{equation}

The {\bf van der Waals} thermal equation of state replaces the ideal gas law by
$$
p(v,\theta) =\frac {{\cal R}\theta}{v-b}-\frac a{v^2},
$$
see M\"uller and M\"uller \cite[(2.33)]{bMUMU}. 
It has two constants $a,b>0$ that are adjusted for practical purposes. It is defined for $v>b$.
We have from \eqref{eq:up} the stability condition 
$$
p_v(v,\theta) =\frac{-{\cal R}\theta}{(v-b)^2}+\frac{2a}{v^3}<0.
$$
The inequality holds for $v$ being slightly larger that $b$ and for large $v$. For lower
temperatures $\theta$ it is violated and there is an interval of physically inadmissible states $v$ related to
a phase change. The van der Waals caloric equation of state is
$$
u(v,\theta) = -\frac av+ F(\theta)
$$
with $F^\prime(\theta)=c_v(\theta)$, see M\"uller and M\"uller \cite[(4.26)]{bMUMU}.
So the stability condition $u_\theta(v,\theta)>0$ from \eqref{eq:up} corresponds to $c_v(\theta)>0$.
This condition is always satisfied.

\subsection{The Tait equation of state}

We additionally want to consider the {\bf Tait equation of state} as a further example which fulfills the convexity requirements. 
This was shown in Liu and Thein \cite{hLITH}. There a detailed presentation was given. 
For additional literature concerning the Tait equation of state we refer for example to 
Dymond and Malhotra \cite{hDYMA}, Ivings et al.\cite{hICT}, Saurel et al.\ \cite{hSCB}, Hantke et al.\ \cite{hHDW1} 
or May and Thein \cite{hMATH}.

	Let the specific internal energy be given as follows
	\begin{equation}
	\label{eq:e_eos}
		u(v,s) = A(v - v_r) + B\Phi(v) + \frac{1}{2C}\left[(s - s_r) - D(v - v_r)\right]^2
		+ \theta_r\left[(s - s_r) - D(v - v_r) + C\theta_r\right] + u_r
		\end{equation}
		with
		$$
		\Phi(v) = 
		\begin{cases}
			&\ln\dfrac{v_r}{v},\qquad\text{for}\;\nu = 1\\[2mm]
			&\dfrac{1}{1 - \nu}\left(\dfrac{1}{v_r^{\nu - 1}} - \dfrac{1}{v^{\nu - 1}}\right),\;\text{for}\;\nu > 1
		\end{cases}\,.
	$$
	The exponent $\nu \geq 1$ can be chosen according to the specific material under consideration.
	The constants $A,B,C,D$ will be specified later.
	The quantities $u_r,v_r, s_r$ and $\theta_r$ denote reference constants of the quantities which may be 
	chosen at a given reference state in the phase space of the material at hand.
	 
	Now we calculate the pressure and the temperature according to the formulas \eqref{eq:theta_p} as
	\begin{align}
	\label{eq:p_tait}
		p &= -\frac{\partial u}{\partial v}(v,s) = -\left(A + B\Phi'(v) - 
		\frac{D}{C}\left[(s - s_r) - D(v - v_r)\right] - D\theta_r\right),\\
		\label{eq:T_tait}
		\theta &= \frac{\partial u}{\partial s}(v,s) = \frac{1}{C}\left[(s - s_r) - D(v - v_r)\right] + \theta_r.
	\end{align}
	Using equation (\ref{eq:T_tait}) for the temperature we obtain the thermal equation of state
	\begin{align}
		p(v,\theta) = D(\theta - \theta_r) - A - B\Phi'(v).\label{eq:p_tait_vT}
	\end{align}
	We choose
	\[
	A := K_r - p_r,\quad B := K_rv_r^\nu\quad\text{and}\quad\bar{p}(\theta):= D(\theta - \theta_r) + p_r.
	\]
	The constant quantity $K_r > 0$ denotes the modulus of compression at a given reference state.
	The constants $K_r,v_r$ may be chosen as the saturation values of the material under consideration for a 
	given reference temperature $\theta_r$, cf.\ Thein \cite{hTHE}.
	Accordingly, $\bar{p}(\theta)$ is obtained as the linearization of the saturation curve near a given temperature. 
	A similar approach is used in Hantke and Thein \cite{hHATH1}.
	Usually, the saturation curve is given as the saturation pressure being a function of the temperature
	\[
	p_0 = f(\theta),
	\]
	see e.g.\ Landau and Lifschitz \cite{bLALI80} or Thein \cite{hTHE}, and thus we would obtain	the linearization
	\[
	\bar{p}(\theta) = f'(\theta_r)(\theta - \theta_r) + p_r.
	\]
	With these assumptions we can rewrite (\ref{eq:p_tait}) into a more well known form
	\begin{align}
	\label{eq:p_tait_vT2}
		p = \bar{p}(\theta) + K_r\left[\left(\frac{v_r}{v}\right)^\nu - 1\right].
	\end{align}
	The requirement $p > 0$ places a restriction on the admissible region of the state space.
	In order to verify the convexity of the internal energy we calculate the second order derivatives
	\begin{align*}
		\frac{\partial^2 u}{\partial v^2}(v,s) = B\Phi''(V) + \frac{D^2}{C},\quad
		\frac{\partial^2 u}{\partial s^2}(v,s) = \frac{1}{C},\quad
		\frac{\partial^2 u}{\partial s\partial v}(v,s) = -\frac{D}{C}.
	\end{align*}
	It is easily verified that $u(v,s)$ is strictly convex if $C > 0$. Indeed $C$ is related to the specific isochoric heat capacity
	\[
	c_v = \frac{\partial u}{\partial \theta}(v,s(v,\theta)) = C\theta
	\]
	and can be chosen to be $C = c_{v,r}/\theta_r$ using reference values.
	Hence the Tait equation of state meets all requirements needed to have a strictly convex specific internal energy.
	
	It is worth noting, that in the barotropic case, i.e.\ when the pressure is a function of the specific volume alone, 
	the Tait equation of state and the stiffened gas equation of state are the same, cf.\ Thein \cite{hTHE}. 
	For further reading and details on the stiffened gas equation
	of state we refer to Fl{\aa}tten et al.\ \cite{hFMM} and the references therein.

\section{Applications of transformations that preserve convexity type properties}
\label{sec:appl}

We now present chains of transformations between thermodynamic functions that preserve convexity type properties,
including changes from convexity to concavity. These are applications of the transformation theorems in Section \ref{sec:convex}.
The convexity type properties can be non-strict, strict or the functions having
definite Hessian matrices.
With these chains of transformations
one can deduce these properties
from the simplest thermodynamic stability conditions, such as \eqref{eq:u_posdef}. If the internal energy has a desired
convexity property, then all other thermodynamic functions inherit the same convexity or concavity type.

\subsection{Relations between extensive functions and variables}
\label{subsec:ext_int}

Let us assume that the internal energy $U(V,S)$ is our fundamental function. We want to make use of the relations
\eqref{eq:theta_p} and \eqref{eq:S_deriv}. The fact that $U_S=\theta>0$ allows for the application of the
exchange of variables using Theorem \ref{thm:e} going from $U(V,S)$ to $S(V,U)$. Since $U_S>0$ convexity is switched
to concavity.

We may use the fact that by \eqref{eq:theta_p} $U_V=-p$ and $U_S=\theta$ to apply the Legendre transform \eqref{eq:legendre} 
to $U$. We set $\widehat{U}(-p,\vt)=\MI_L(U)(V,S)$. Analogously, we can use \eqref{eq:S_deriv} to define  
$\widehat{S}\left( \frac{p}{\vt}, \frac1{\vt}\right)=\MI_L(S)(V,U)$.
Further, we can set $G(p,\vt)=-\widehat{U}(-p,\vt)$ via an affine transformation $\mathcal{T}_A$ of type \eqref{eq:sign}.
The function $G$ is the concave Gibbs free energy or free enthalpy, see e.g.\ M\"uller and M\"uller \cite[(4.33)]{bMUMU}.
We summarize the results in the following diagram. The mapping going to the right is given above the arrow
and the inverse below
\begin{align*}
G(p,\vt) ~ \xLongleftrightarrow[-\mathcal{T}_{A}]{-\mathcal{T}_{A}^{-1}} ~
\widehat{U}(-p,\vt) ~ \xLongleftrightarrow[\mathcal{I}_{L}, ~ -pV+\vt S-U]{\mathcal{I}_{L}^{-1}} ~ 
U(V,S)  ~  \xLongleftrightarrow[\mathcal{I}_{E}^{-1}]{\,\mathcal{I}_{E},\;  U,S\,}  ~  
S(V,U)  ~ \xLongleftrightarrow[\mathcal{I}_{L}^{-1}]{\, \mathcal{I}_{L}, ~ (pV+U)/\vt -  S \,} ~
\widehat{S}\textstyle\left( \frac{p}{\vt}, \frac1{\vt}\right).
\end{align*}

\subsection{Relations between extensive and intensive variables}

We have for the specific internal energy $u(v,s)=\frac 1M U(Mv,Ms)=\frac 1M U(V,S)$. This
a simple linear rescaling, i.e.\ a special case of an affine transformation $\mathcal{T}_{A}$. Multiplication
by the constant mass $M$ does not change any convexity type properties.
We obtain the derivatives
$u_v=U_V$, $u_s=U_S$, $u_{vv}=M\cdot U_{VV}$, $u_{ss}=M\cdot U_{SS}$, $u_{vs}=M\cdot U_{VS}$.
The derivatives of $s$ are analogous.
Since $M>0$, the stability relations \eqref{eq:stab} and \eqref{eq:u_posdef} carry over to the specific state variables 
\begin{align*}
u(v,s) ~ \xLongleftrightarrow[\mathcal{T}_{A}]{\mathcal{T}_{A}^{-1}} ~ 
U(V,S)  ~  \xLongleftrightarrow[\mathcal{I}_{E}^{-1}]{\;\mathcal{I}_{E},\;  U,S\;}  ~  
S(V,U)  ~ \xLongleftrightarrow[\mathcal{T}_{A}^{-1}]{\mathcal{T}_{A}} ~ s(v,u).
\end{align*}
We also obtain equivalences to obtain the analog of first chain for the specific quantities
\begin{align*}
g(p,\vt) ~ \xLongleftrightarrow[\mathcal{T}_{A}]{\mathcal{T}_{A}^{-1}} ~
\widehat{u}(-p,\vt) ~ \xLongleftrightarrow[\;\mathcal{I}_{L}, ~ -pv+\vt s-u\;]{\mathcal{I}_{L}^{-1}} ~ 
u(v,s)  ~  \xLongleftrightarrow[\mathcal{I}_{E}^{-1}]{\;\mathcal{I}_{E},\;  u,s\;}  ~  
s(v,u)  ~ \xLongleftrightarrow[\mathcal{I}_{L}^{-1}]{\; \mathcal{I}_{L}, ~ (pv+u)/\vt -  s \;} ~
\widehat{s}\textstyle\left( \frac{p}{\vt}, \frac1{\vt}\right).
\end{align*}

The specific kinetic energy
energy $\frac{|\ul v|^2}2$ is strictly convex. It has the identity matrix in three dimension as positive definite Hessian matrix.
This implies that the specific total energy 
\begin{equation}
\label{eq:spec_tot_ener}
e(v,s,\ul v) =u(v,s)+\frac{|\ul v|^2}2,
\end{equation}
as the sum of specific internal and kinetic energies, inherits the convexity properties of the
specific internal energy $u$.  It has the Hessian matrix
with respect to $v$, $s$ and $\ul v$ given as
$$
\left(\begin{array}{ccccc}
u_{vv}&u_{vs}&0&0&0\\
u_{s v}&u_{s\eta}&0&0&0\\
0&0&1&0&0\\
0&0&0&1&0\\
0&0&0&0&1
\end{array}\right).
$$
For the specific total energy we have the following chain
\begin{align*}
s(v, e, \vv)~\xLongleftrightarrow[\;\mathcal{I}_{E},\;  u,s\;]{\mathcal{I}_{E}^{-1}}
~ e(v,s, \vv) ~ \xLongleftrightarrow[+\frac12 \abs{\vv}^2]{-\frac12 \abs{\vv}^2} ~ 
e(v,s)  ~  \xLongleftrightarrow[\mathcal{I}_{E}^{-1}]{\;\mathcal{I}_{E},\;  u,s\;}  ~  s(v,u).
\end{align*}

\subsection{The relation between specific state variables and their densities}

The reciprocal involution in Theorem \ref{thm:recinv} can be used to go back and forth between certain specific 
thermodynamic state variables
and their associated densities by using the fact that the 
specific volume $v$ is positive and therefore also the density $\rho =\frac 1v>0$. The possibility of
a vacuum state variable where $\rho =0$ is excluded in these considerations. 
If we apply \eqref{recip_fct} with $u_1=v$ and $w_1=\rho$ to any specific
state variable $\phi$ depending on other specific state variables, we obtain a density $\psi$ depending on densities as arguments.
For instance the specific total energy $e(v,s,\ul v)$ in specific variables given in \eqref{eq:spec_tot_ener} becomes the total
energy density
\begin{equation}
\label{eq:E_rho_S_M}
\ol E(\rho,\ol S,\ol{\ul M}) =\rho e\left(\textstyle\frac 1\rho,\frac {\ol S}\rho,\frac{\ol{\ul M}}\rho\right)
\displaystyle=\ol U(\rho,\ol S)+\frac{|\ol{\ul M}|^2}{2\rho}
\end{equation}
with densities as variables.
By Theorem \ref{thm:recinv} this function is again strictly convex. It has a positive definite Hessian matrix in the variables
$\rho$, $\ol S$ and $\ol{\ul M}$ if and only if $s$ has a negative definite Hessian matrix with respect to $v$ and $u$.
From the strict convexity of $u(v,s)$ we also obtain by Theorem \ref{thm:recinv} that the internal energy density $\ol U$ given as
\begin{equation}
\label{eq:U_rho_S}
\textstyle\ol U(\rho,\ol S)=\rho u\left(\frac 1\rho,\frac {\ol S}\rho\right)  
\end{equation}
is strictly convex in $\rho$ and $\ol S$. If the Hessian matrix of $u$ is positive definite then so is the Hessian matrix of $\ol U$.

The situation for the specific kinetic energy $e^{kin}(\ul v)=\frac{|\ul v|^2}2$ taken alone
is different. It is strictly convex in $\ul v$ 
and its Hessian matrix is the positive definite identity matrix $\ul I\in\R^{3\times 3}$. Since it does not depend on $v$
we cannot apply Theorem \ref{thm:recinv} as we did for the specific internal
energy $u$. The kinetic energy density can still be obtained by the reciprocal transformation using $v$ or by multiplication
of $e^{kin}$ by $\rho$ to give $\ol E^{kin}(\rho,\ol{\ul M})=\frac{|\ol{\ul M}|^2}{2\rho}$.
The derivatives are 
\begin{eqnarray*}
\ol E^{kin}_\rho &=&-\frac{|\ol{\ul M}|^2}{2\rho^2},\qquad \ol E^{kin}_{\ol{M}_i} =\frac{\ol{M}_i}\rho\quad\text{for}\; i=1,2,3,\\
\ol E^{kin}_{\rho\rho} &=&\frac{|\ol{\ul M}|^2}{\rho^3},\qquad \ol E^{kin}_{\ol{M}_i\rho} =\ol E^{kin}_{\rho \ol{M}_i} 
=-\frac{\ol{M}_i}{\rho^2}\qquad\text{and}\quad
\ol E^{kin}_{\ol{M}_i\ol{M}_j} =\frac{\delta_{ij}}\rho\quad\text{for}\; i,j=1,2,3.
\end{eqnarray*}
Applying \eqref{eq:conv_ineq} to $\ol E^{kin}$ gives 
\begin{eqnarray*}
\frac{|\ol{\ul M}_1|^2}{2\rho_1}-\frac{|\ol{\ul M}_2|^2}{2\rho_2}+\frac{|\ol{\ul M}_2|^2}{2\rho_2^2}(\rho_1-\rho_2) 
&&\hspace{-6mm}-\frac 1\rho_2\ol{\ul M}_2\cdot
(\ol{\ul M}_1-\ol{\ul M}_2) \\
= \frac 1{2\rho_1}&&\hspace{-6mm}\left(|\ol{\ul M}_1|^2+\frac{\rho_1^2|\ol{\ul M}_2|^2}{\rho_2^2}
-2\rho_1\frac{\ol{\ul M}_2\cdot\ol{\ul M}_1}{\rho_2}\right)\\
=\frac 1{2\rho_1}&&\hspace{-6mm}\left(\ol{\ul M}_1-\frac{\rho_1}{\rho_2}\ol{\ul M}_2\right)^2\ge 0.
\end{eqnarray*}
So $\ol E^{kin}$ is only convex and not strictly convex in $\rho$ and $\ol{\ul M}$. 
But adding it to the strictly convex internal energy density gives a strictly convex total energy density
$\ol E(\rho,\ol S,\ol{\ul M})$, as we have seen above.

The Hessian matrix of $\ol E^{kin}$ with respect to $\rho$ and $\ol{\ul M}$ is
\begin{eqnarray*}
\left(\begin{array}{cccc}
\frac{|\ol{\ul M}|^2}{\rho^3}&-\frac{\ol{M}_1}{\rho^2}&-\frac{\ol{M}_2}{\rho^2}&-\frac{\ol{M}_3}{\rho^2}\\
-\frac{\ol{M}_1}{\rho^2}&\frac 1\rho&0&0\\
-\frac{\ol{M}_2}{\rho^2}&0&\frac 1\rho&0\\
-\frac{\ol{M}_3}{\rho^2}&0&0&\frac 1\rho
\end{array}\right)
&&\\
=\left(\begin{array}{cccc}
1&-\frac{\ol{M}_1}\rho&-\frac{\ol{M}_2}\rho&-\frac{\ol{M}_3}\rho\\
0&1&0&0\\
0&0& 1&0\\
0&0&0& 1
\end{array}\right)
&&\hspace{-6.5mm}
\left(\begin{array}{cccc}
0&0&0&0\\
0&\frac1\rho&0&0\\
0&0&\frac 1\rho&0\\
0&0&0&\frac 1\rho
\end{array}\right)
\left(\begin{array}{cccc}
1&0&0&0\\
-\frac{\ol{M}_1}\rho&1&0&0\\
-\frac{\ol{M}_2}\rho&0& 1&0\\
-\frac{\ol{M}_3}\rho&0&0& 1
\end{array}\right).
\end{eqnarray*}
So it has rank 3 for $\rho>0$ and is only positive semi-definite, as expected.

Applying the reciprocal involution \eqref{eq:recip} and Theorem \ref{thm:recinv} to the specific entropy $s(v,u)$ gives
that the entropy density $\ol S$ given as
$$
\textstyle \ol S(\rho, \ol U) = \rho s\left(\frac 1\rho,\frac {\ol U}\rho\right)
$$
is concave in $\rho$ and $\ol U$ and has a negative definite Hessian matrix if $s$ in $v$ and $u$ does.

We obtain from \eqref{eq:E_rho_S_M} and \eqref{eq:U_rho_S} that
$$
\ol E_{\ol S}(\rho,\ol S,\ol{\ul M})=\ol U_{\ol S}(\rho,\ol S)=u_s\left(\textstyle\frac 1\rho,\frac {\ol S}\rho\right)=\theta(\rho,\ol S)>0.
$$
Therefore, we can apply the the change of variables in Theorem \ref{thm:e} to see that the entropy density
$\ol S$ in the conservative variables $\ul u=(\rho,\ol{\ul M},\ol E)^T$ has the concavity properties corresponding 
to the convexity properties
of total energy density $\ol E$ in the variables $\rho$, $\ol S$ and $\ol{\ul M}$. The concavity properties come 
essentially from those of the specific entropy 
$s$ as a function of $v$ and $u$ or the convexity properties of the specific internal energy $u$ as a function of $v$ and $s$.

There is an alternative route to proving the concavity properties of the entropy density $\ol S$ in the conserved variables $\ul u$.
We can also again consider the specific total energy as $e(v,\ul v,s)=u(v,s)+\frac{\ul v^2}2$ 
and use the fact $e_s=u_s=\theta >0$ in order to apply the interchange of a variable and 
a function in Theorem \ref{thm:e} to obtain $s(v,e,\ul v)$. Since $e_s>0$ the function
$s$ is strictly concave in the variables $v$, $\ul v$ and $e$ since $u$ is assumed to be strictly convex. 
Its Hessian matrix is negative definite if
that of $e$ is positive definite. Now the reciprocal involution using $v$ gives the entropy density as
\begin{equation}
\label{eq:entu}
\ol{S}(\ul u) =\ol{S}(\rho,\ol{\ul M},\ol E)
=\rho s\left(\textstyle\frac 1\rho ,\frac 1\rho\ol{\ul M},\frac {\ol E}\rho \right)=\rho s(\ul u)
\end{equation}
in the conserved variables $\ul u =(\rho,\ol{\ul M},\ol E)^T$. Again we obtain the concavity properties as above.
We see that the reciprocal transformation and the exchange of variables commute in this case.

We summarize these equivalences in the following scheme
\begin{align*}
s(v, u)& \quad \xLongleftrightarrow[\;\mathcal{I}_{E},\;  u,s\;]{\mathcal{I}_{E}^{-1}} \quad u(v,s) \qquad
\xLongleftrightarrow[-\frac12 \abs{\vv}^2]{\; +\frac12 \abs{\vv}^2\;} \qquad  e(v,s,\ul v) \quad
\xLongleftrightarrow[\mathcal{I}_{R}^{-1}]{\;\mathcal{I}_{R},\;\rho =\frac 1v\;}\quad\ol E(\rho,\ol{\ul M},\ol S)\\
\qquad\Updownarrow\mathcal{I}_{R},&\;\textstyle\rho =\frac 1v\displaystyle\qquad\qquad\;\;\Updownarrow\mathcal{I}_{R},\;
\textstyle\rho =\frac 1v\displaystyle
\qquad\qquad\qquad\quad\Updownarrow\mathcal{I}_{E},\;  e,s
\qquad\qquad\qquad\Updownarrow \mathcal{I}_{E},\; \ol E,\ol S\\
\ol S(\rho,\ol E)&\;\; \xLongleftrightarrow[\;\mathcal{I}_{E},\;  \ol U,\ol S\;]{\mathcal{I}_{E}^{-1}} \;\;\; \ol U(\rho,\ol S)
\qquad\qquad\qquad\qquad
s(v,e,\ul v)\quad\xLongleftrightarrow[\mathcal{I}_{R}^{-1}]{\;\mathcal{I}_{R},\;\rho =\frac 1v\;}\quad\ol S(\rho,\ol{\ul M},\ol E).
\end{align*}
If one of the functions in this scheme is strict, then so are all others. If one of them has a definite Hessian matrix,
then so do all others. Note that all the chains that we considered are linked via $u(v,s)$ and $s(v,u)$. 

\subsection{The generating potential in Godunov's theory}

In the theory of Godunov outlined in Section \ref{sec:cons} the convex entropy function 
$\Phi(\rho,\ol{\ul M},\ol E) =-\ol S(\rho,\ol{\ul M},\ol E)$ is used. Via the Legendre transformation \eqref{legendre} one obtains
convex generating potential $L(\up)$ in the main field variables $\up=\bs\Phi_{\ul u}(\ul u)$ with $\ul u =(\rho,\ol{\ul  M},\ol E)^T$.
We see that $L$ inherits the properties of $u(v,s)$ via appropriate transformations.

Godunov and Romenski \cite{bGORO} used the following interesting 
alternative chain to prove the convexity of $L$ in the main field variables.
They started with $e(v,s,\ul v)$ and used the Legendre transformation to obtain 
$\widehat{L}(-p,\theta,\ul v)=-pv+\theta s +\frac{|\ul v|^2}2-e(v,s,\ul v)$. Here the fact that $\frac{|\ul v|^2}2$ is invariant
under the Legendre transformation was used. Since $L_p=-v<0$, 
an exchange of variables $\MI_E$ of $p$ to $\widehat{L}$ was used to obtain
the convex function $p(-\widehat{L},\theta,\ul v)$. 
Next they used the fact that $\theta >0$ in order to apply an inverse reciprocal transformation
$\MI_R^{-1}$ using $\theta$. This gave the function 
$L(-\frac 1\theta\widehat{L},\frac 1\theta\ul v,\frac 1\theta) 
=\frac 1\theta p(-\frac 1\theta\widehat{L},\frac 1\theta\ul v,\frac 1\theta)$.
In his seminal paper Godunov \cite{hGOD1} had identified $-L$ as the concave generating potential that is the Legendre transform
of $\ol S(\rho,\ol {\ul M},\ol E)$ and the main field variables
in the form $-\frac 1\theta\widehat{L},-\frac 1\theta \ul v, \frac 1\theta$. 
We use $\up =(-\frac 1\theta\widehat{L},\frac 1\theta \ul v,-\frac 1\theta)$. Due to \eqref{eq:sign} the slight differences in
sign of the variables do not matter. We can summarize these results as follows
\begin{align*}
e(v,s) ~ \xLongleftrightarrow[-\frac12 \abs{\vv}^2]{+\frac12 \abs{\vv}^2} ~ 
e(v,s,\ul v) ~ \xLongleftrightarrow[\mathcal{I}_{L}^{-1}]{\;\mathcal{I}_{L},\;\widehat{L}\;} ~ &\widehat{L}(-p,\theta,\ul v)
~ \xLongleftrightarrow[\mathcal{I}_{E}^{-1}]{\;\mathcal{I}_{E},\;  p,\widehat{L}\;} ~ p(-\widehat{L},\theta,\ul v)
~ \xLongleftrightarrow[\mathcal{I}_{R}]{\;\mathcal{I}_{R}^{-1},\;\theta\;} ~ 
\textstyle L(-\frac 1\theta\widehat{L},\frac 1\theta\ul v,\frac 1\theta)\\
&\qquad\qquad\qquad\qquad\qquad\qquad\qquad\qquad\qquad\qquad\Updownarrow \text{using }\eqref{eq:sign}\\
& \qquad-\ol S(\rho,\ol{\ul M},\ol E)=\Phi(\rho,\ol{\ul M},\ol E) ~
\xLongleftrightarrow[\mathcal{I}_{L}^{-1}]{\;\mathcal{I}_{L},\;\up\;}~ 
\textstyle L(-\frac 1\theta\widehat{L},\frac 1\theta \ul v,-\frac 1\theta).
\end{align*}

Note that the first line of the above chain is the directest way to obtain a generating potential $L(\up)$ 
and main field variables, as discussed in Section \ref{sec:cons}, from the definition of a 
convex, strictly convex or positive convex specific internal energy of a thermodynamic system. 
One may add other terms that share the same convexity properties with the specific internal energy, 
such as the the specific momentum $\frac{|\ul v|^2}2$.
Then the generating potential inherits the same convexity properties.
Further, one can add terms to the generating potential $L(\up)$ that preserve its convexity properties.
This is a path to generate, starting from specific energies, symmetric hyperbolic systems 
of conservation laws for the conserved densities in the form \eqref{entvar-law2} 
by defining the fluxes as $L^i(\up) =v_iL(\up)$,
see e.g.\ Godunov and Romenski \cite[Chapter V and Appendix]{bGORO}.
As we see above, the formalism also leads to an entropy function $\bs\Phi$ sharing these properties and
an entropy density $\ol S=-\bs\Phi$ with analog concavity properties. One may thereby determine an entropy inequality that
is useful for identifying admissible discontinuous shock solutions to the system of conservation laws.

\section{The role of convexity in the analysis and numerics of compressible fluid flows}
\label{sec:role}

Thermodynamic stability and convexity of the energy as well as concavity of the entropy have an important impact 
on the analysis of compressible fluid flow models, such as the Euler and Navier-Stokes(-Fourier) systems. 
Here we present an overview of the theoretical results, including mathematical and numerical analysis.  

We first refer to the monograph of Majda \cite{bMAJ}, where the local-in-time existence of  the strong solution of the 
Euler system was established. Due to the concavity of the entropy, the Euler system can be transformed to 
a symmetric hyperbolic system and then the existence of a strong solution was derived. 

The convex structure of the energy is also heavily used in the limiting process to derive the global-in-time existence 
of weak or dissipative weak (measure-valued) solutions to the Navier-Stokes(-Fourier) system,
see Feireisl et al.\ \cite{bFEI, bFLMS, bFENO}. 
We should point out that the global-in-time existence of weak solutions is available only for specific pressure laws. 
In more general cases of pressure laws, the existence of dissipative (measure-valued) solutions was proven. 
The latter satisfy the underlying PDE system modulo defect measures arising in the momentum equation and the energy inequality. 
The convexity of energy gives rise to a positive sign of the energy defect, which in turn controls the Reynolds 
defect in the momentum equation.  

The global-in-time existence of dissipative (measure-valued) solutions can also be obtained 
by the convergence analysis of structure-preserving numerical methods, see Feireisl et al.\ \cite{hnFELU,hnFLM,bFLMS}. 
For the Euler system, the concavity of the entropy allows us to derive the weak bounded variation (BV) a priori estimates, 
with which the consistency of a numerical method can be established. 
Working in the framework of the generalised Lax equivalence principle, the convergence of a numerical scheme 
to a dissipative measure-valued solution was proven for stable and consistent numerical approximations in \cite{hnFLM}. 
We also refer to \cite{hnFELU,hnLSY1} where the same strategy was applied to the  Navier-Stokes(-Fourier) system.

The convexity of the energy density $\ol E$ with respect to $\ul u =(\vr, \ol{\ul M},\ol S)$
as well as the concavity of the entropy density $\ol S$ with respect to $(\vr, \ol{\ul M}, \ol E)$ are necessary to derive 
the relative energy/entropy as a Lyapunov functional in order to analyze stability properties. 
Indeed, the relative energy/entropy is a problem-suited ``metric", the so-called Bregman distance, 
that is used to measure the distance between two different solutions. 

Let $\ul u_1=(\vr_1,\vm_1,S_1)$ and $\ul u_2=(\vr_2,\vm_2,S_2)$ be two solutions of different systems. 
Then the relative energy $R_E$ can be viewed as the first-order Taylor expansion of $\ol E$ with respect to $\ul u$ defined by
\begin{align*}
\RE{\ul u_1}{\ul u_2} = \ol E(\ul u_1) - \ol E(\ul u_2)  - \ol E_{\ul u}(\ul u_2) \left(\ul u_1 - \ul u_2 \right).
\end{align*}
Due to the convexity of the energy $\ol E$, one knows that $\RE{\cdot}{\cdot} \geq 0$ and ``$=$" 
holds if and only if $\ul u_1 = \ul u_2$. 
Further, $L^p$-error can be derived from the relative energy due to the convexity of the thermodynamic variables. 
If $\ul u$ is uniformly bounded, then the equivalence between the relative energy and the $L^2$-errors holds. 
Hence, the relative energy is an elegant tool for analysis of the distance between two different solutions. 

We refer to Feireisl et al.\ \cite{hBRFE,bFLMS}. There the weak-strong uniqueness for weak or dissipative weak solutions 
was proven for the Euler system. In Feireisl and Novotn\'y \cite{bFENO} 
this property was shown for the Navier-Stokes(-Fourier) system. 
As weak solutions are generally not unique, the weak-strong uniqueness is crucial in order to derive stability 
of the strong solution in a larger class of generalised, weak or dissipative weak, solutions.
Consequently,  weak solutions turn out to be the strong solution as long as the latter exists.
This relative energy tool has also been shown to be successful in the numerical analysis of the error estimates. 
For the latter, we refer to Luk\'a\v cov\'a-Medvid'ov\'a et al.\ \cite{hnLSY} for the Euler system, \cite{hnBLMSY,hnLSY1} 
for the Navier-Stokes system, 
and  \cite{hnBEHL, hnLISH} for multi-component or multiphase models.
In addition, the relative energy functional provides elegant way to study the asymptotic behaviour of compressible fluid flows in 
their singular limits, e.g.\ as the Mach number vanishes, see Feireisl and Novotn\'y \cite{bFENO1}.\\[3mm]  

{\bf Acknowledgements:} The authors are grateful for financial support by the Sino-German cooperation group
{\sl Advanced Numerical Methods for Nonlinear Hyperbolic Balance Laws and
Their Applications} NSFC-DFG GZ 1465. 
The research of M.\ Lukacova was supported by the Deutsche Forschungsgemeinschaft  project
number 233630050-TRR 146 and through SPP 2410 (COSCARA), project number 525800857.  
She gratefully acknowledges  support of the Gutenberg Research College of the
University of Mainz and the Mainz Institute of Multiscale Modeling.
F.\ Thein gratefully acknowledges the Deutsche Forschungsgemeinschaft
(DFG, German Research Foundation) for the financial support through the project 525939417/SPP2410.
The work of Y.\ Yuhuan was supported by National Natural Science Foundation of China under grant No. 12401527 and 
Natural Science Foundation of Jiangsu Province under grant No. BK20241364.

\bibliography{c:/A-EIGENE_DATEIEN/Bibfiles/book,c:/A-EIGENE_DATEIEN/Bibfiles/hyp,c:/A-EIGENE_DATEIEN/Bibfiles/flu,%
c:/A-EIGENE_DATEIEN/Bibfiles/misc,c:/A-EIGENE_DATEIEN/Bibfiles/num,c:/A-EIGENE_DATEIEN/Bibfiles/trans,%
c:/A-EIGENE_DATEIEN/Bibfiles/publ,c:/A-EIGENE_DATEIEN/Bibfiles/hypnum} %
\bibliographystyle{plain}

\end{document}